\documentclass[11pt]{article}
\usepackage{amsmath,amsthm}
\usepackage{amssymb,mathrsfs}
\usepackage{bm,mathtools}
\usepackage{extarrows}
\usepackage{tikz}
\usetikzlibrary{matrix,calc}
\tikzset{
  chart/.style={
  rectangle, 
  rounded corners, 
  draw=black, semithick,
  text width=6.2em, 
  text centered}
}

\usepackage{xcolor}

\usepackage{geometry}
\geometry{left=28mm,right=28mm,
top=26mm,bottom=26mm}

\usepackage[colorlinks=true,
linkcolor=blue,citecolor=blue,
urlcolor=blue,pagebackref]{hyperref}

\makeatletter
\def\@seccntDot{.}
\def\@seccntformat#1{\csname the#1\endcsname\@seccntDot\hskip 0.5em}
\renewcommand\section{\@startsection{section}{1}{\z@}%
{18\p@ \@plus 6\p@ \@minus 3\p@}%
{9\p@ \@plus 6\p@ \@minus 3\p@}%
{\large\bfseries\boldmath}}
\renewcommand\subsection{\@startsection{subsection}{2}{\z@}%
{12\p@ \@plus 6\p@ \@minus 3\p@}%
{3\p@ \@plus 6\p@ \@minus 3\p@}%
{\bfseries\boldmath}}
\renewcommand\subsubsection{\@startsection{subsubsection}{3}{\z@}%
{12\p@ \@plus 6\p@ \@minus 3\p@}%
{\p@}%
{\bfseries\boldmath}}
\makeatother

\usepackage{microtype}


\theoremstyle{plain}
\newtheorem{theorem}{Theorem}[section]
\newtheorem{lemma}{Lemma}[section]
\newtheorem{corollary}{Corollary}[section]
\newtheorem{proposition}{Proposition}[section]
\newtheorem{problem}{Problem}[section]

\theoremstyle{definition}

\newtheorem{remark}{Remark}[section]
\newtheorem{example}{Example}[section]


\numberwithin{equation}{section}
\allowdisplaybreaks
\parindent=18pt

\DeclareMathOperator{\ex}{ex}

\DeclareMathOperator{\cl}{cl}
\DeclareMathOperator{\supp}{supp}

\title{A new spectral Tur\'an theorem for weighted graphs and consequences}

\author{Lele Liu\footnote{School of Mathematical Sciences, Anhui University, Hefei 230601,
P.R. China. E-mail: \texttt{liu@ahu.edu.cn} (L. Liu). Supported by the National
Nature Science Foundation of China (No. 12471320), and Anhui Provincial Natural Science Foundation for Excellent Young Scholars (No. 2408085Y003).}
~~ and ~~
Bo Ning\footnote{College of Cryptology and Cyber Science \& College of Computer Science, Nankai University, Tianjin 300350, P.R. China.
E-mail: \texttt{bo.ning@nankai.edu.cn} (B. Ning). Partially supported by the National Nature Science
Foundation of China (No. 12371350) and Fundamental Research Funds for the Central Universities, Nankai University (No. 63243151).}}

\date{}

\begin{document}
\maketitle

\begin{abstract}
Confirming a conjecture of Elphick and Edwards and strengthening a spectral theorem 
of Wilf, Nikiforov proved that for any $K_{r+1}$-free graph $G$, 
$\lambda(G)^2 \leq 2 (1 - 1/r) m$, where $\lambda(G)$ is the spectral radius 
of $G$, and $m$ is the number of edges of $G$. This result was later improved 
in \cite{LiuN26}, which showed that for any graph $G$,
$\lambda(G)^2 \leq 2 \sum_{e \in E(G)} \frac{\cl(e) - 1}{\cl(e)}$,
where $\cl(e)$ denotes the order of the largest clique containing the edge $e$. 

In this paper, we further extend this inequality to weighted graphs, proving that
\[
\lambda(G)^2 \leq 2 \sum_{e \in E(G)} \frac{\cl(e) - 1}{\cl(e)} w(e)^2,
\]
and we characterize all extremal graphs attaining this bound. Our main theorem yields 
several new consequences, including two vertex-based and vertex-degree-based local 
versions of Tur\'an's theorem, as well as weighted generalizations of the Edwards--Elphick 
theorem and the Cvetkovi\'c theorem, and two localized versions of Wilf's theorem. 
One of these localized Wilf's theorem confirms a conjecture that originates from 
Probability and Operator Algebras and was proposed by R. Tripathi independently of us. 
Moreover, our main result unifies and implies numerous earlier ones from spectral 
graph theory and extremal graph theory, including Stanley's spectral inequality, 
Hong's inequality, a localized Tur\'an-type theorem, and a recent extremal theorem by Adak 
and Chandran. Notably, while Nikiforov's earlier spectral inequality implied Stanley's bound, 
it did not imply Hong's inequality---a gap that is now bridged by our result. 
As a key tool, we establish the inequality $\sum_{e \in E(G)} \frac{2}{\cl(e)} \geq n-1$, 
which complements an upper bound $\sum_{e \in E(G)} \frac{2}{\cl(e)-1} \leq n^2 - 2m$ 
due to Brada\v{c}, and Malec and Tompkins, independently.
\par\vspace{2mm}

\noindent{\bfseries Keywords:} Localized spectral Tur\'an problem; Spectral radius; Spectral inequality
\end{abstract} 


\section{Introduction}
\label{sec:1}

A \emph{weighted graph} $G$ is a triple $(V(G), E(G), w)$, where $V(G)$ is the vertex set, 
$E(G)$ is the edge set, and $w: E(G)\to \mathbb{R}$ is a weight function assigning a real value to each edge.
When all edge weights are $1$, this reduces to the usual unweighted graph. The \emph{adjacency matrix} of
a weighted graph $G$ on $n$ vertices is the $n\times n$ matrix $A(G) = [a_{ij}]$, where $a_{ij} = w(e)$
if $e=ij\in E(G)$, and $0$ otherwise. The \emph{spectral radius} of $G$, denoted by $\lambda(G)$, is the 
maximum of the absolute values of the eigenvalues of $A(G)$. For a matrix $M\in\mathbb{R}^{m\times n}$, 
the \emph{Frobenius norm} of $M = [m_{ij}]$ is defined as $\|M\|_F := \big(\sum_{i=1}^m \sum_{j=1}^n m_{ij}^2\big)^{1/2}$.
Let $A = [a_{ij}]$ and $B = [b_{ij}]$ be two matrices of the same dimensions. 
The \emph{Hadamard product} $A\circ B$ of $A$ and $B$ is defined via element-wise multiplication: 
$(A\circ B)_{ij} = a_{ij}b_{ij}$. For a subset $S\subseteq [n]:=\{1,2,\ldots,n\}$, 
the \emph{characteristic vector} of $S$ is the vector $\bm{1}_S\in\{0,1\}^n$ whose $i$-th 
entry is $1$ if $i\in S$ and $0$ otherwise. 

For a weighted graph $G$ and an edge $e \in E(G)$, let $\cl_G(e)$ (or $\cl(e)$ in short) 
denote the order of the largest clique containing $e$. Our main theorem is the following.

\begin{theorem}\label{thm:main}
Let $G$ be a weighted graph with edge weight $w(e)$ for each edge $e$. Then
\begin{equation}\label{eq:weighted-local-bound}
\lambda(G) \leq \sqrt{2 \sum_{e\in E(G)} \frac{\cl(e) - 1}{\cl(e)} w(e)^2}.
\end{equation}
Equality holds if and only if $G$ is, up to isolated vertices, a complete $r$-partite graph 
for some $r$ \textup{(}with partition $V_1\cup V_2\cup\cdots\cup V_r$\textup{)}, and 
there exists a vector $\bm{w}\in\mathbb{R}^n$ such that 
\begin{enumerate}
\item[$(1)$] $A(G) = \pm\sum_{i=1}^r (\bm{1}_{V_i}\circ\bm{w}) \big((\bm{1} - \bm{1}_{V_i})\circ\bm{w}\big)^{\mathrm{T}}$; and

\item[$(2)$] $\|\bm{1}_{V_i}\circ\bm{w}\|^2 = \|\bm{w}\|^2 - \sqrt{1-1/r} \cdot \|A(G)\|_F$ for each $i\in [r]$.
\end{enumerate}
\end{theorem}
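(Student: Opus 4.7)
The plan is to derive \eqref{eq:weighted-local-bound} by combining a weighted Cauchy--Schwarz applied to the Rayleigh quotient of $A(G)$ with a local Motzkin--Straus-type combinatorial lemma, and then to read off the equality characterization in (1)--(2) from the combined equality cases.

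First, by the variational formula for the spectral radius of a symmetric matrix, I would pick a unit vector $\bm{x}\in\mathbb{R}^n$ with $\bm{x}^{\mathrm{T}} A(G)\bm{x}=\pm\lambda(G)$; after replacing $A(G)$ by $-A(G)$ if necessary (which only negates all edge weights and affects neither side of \eqref{eq:weighted-local-bound}), one may assume $\lambda(G)=2\sum_{ij\in E(G)} w(ij)\,x_i x_j$. Decomposing each summand as $w(ij)\,x_i x_j = \bigl(\sqrt{\tfrac{\cl(ij)-1}{\cl(ij)}}\,w(ij)\bigr)\bigl(\sqrt{\tfrac{\cl(ij)}{\cl(ij)-1}}\,x_i x_j\bigr)$ and applying Cauchy--Schwarz yields
\[
\lambda(G)^2 \;\leq\; \Bigl(2\sum_{ij\in E(G)}\tfrac{\cl(ij)-1}{\cl(ij)}\,w(ij)^2\Bigr)\Bigl(2\sum_{ij\in E(G)}\tfrac{\cl(ij)}{\cl(ij)-1}\,x_i^2 x_j^2\Bigr),
\]
so the inequality follows once the second factor is bounded by $1$.

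The key combinatorial step is then the local Motzkin--Straus lemma
\[
2\sum_{ij\in E(G)}\tfrac{\cl(ij)}{\cl(ij)-1}\,y_i y_j \;\leq\; 1 \qquad (\bm{y}\geq 0,\ \textstyle\sum y_i=1),
\]
applied with $y_i=x_i^2$ (so that $\sum y_i=\|\bm{x}\|^2=1$). I would prove this by a Motzkin--Straus-style compactness-plus-shifting argument: among maximizers of the left-hand side pick one whose support $S$ is minimal under inclusion, and observe that the Hessian of the objective along the direction $\bm{e}_i-\bm{e}_j$ equals $-2\tfrac{\cl(ij)}{\cl(ij)-1}\mathbf{1}_{ij\in E(G)}$, which vanishes on every non-edge. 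Hence whenever $i,j\in S$ but $ij\notin E(G)$, the objective is affine along $\bm{e}_i-\bm{e}_j$, so optimality allows mass-shifting that kills one coordinate, contradicting minimality unless $G[S]$ is a clique of some size $k$. On such a clique every edge has $\cl(ij)\geq k$, and the bound $\sum_{i\in S}y_i^2\geq 1/k$ (Cauchy--Schwarz) then gives objective value at most $\tfrac{k}{k-1}(1-1/k)=1$, as required.

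For the equality characterization, Cauchy--Schwarz equality forces $\tfrac{\cl(ij)-1}{\cl(ij)}w(ij)=\beta\,x_i x_j$ on every edge for a common scalar $\beta$, in particular pinning $\cl(ij)$ to a constant value $r$. Equality in the lemma (for $\bm{y}=(x_i^2)$), combined with the affine-along-non-edges analysis applied to $\bm{y}$ itself, then forces $\supp(\bm{x})$ to induce a complete $r$-partite graph $K_{n_1,\ldots,n_r}$ with $\sum_{i\in V_s} x_i^2=1/r$ on each part. Choosing $\bm{w}$ as the appropriate scalar multiple of $\bm{x}$, the relation $w(ij)=\pm w_i w_j$ on edges yields the rank-one decomposition in (1), and the equal-mass identity $M:=\sum_{i\in V_s} w_i^2$ (independent of $s$) translates into (2) via the direct computation $\|A(G)\|_F^2 = r(r-1)M^2$. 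The main obstacle will be the local Motzkin--Straus lemma and the characterization of \emph{all} its maximizers: the edge-dependent coefficient $\tfrac{\cl(ij)}{\cl(ij)-1}$ blocks any direct appeal to classical Motzkin--Straus, and both the multipartite structure and the squared-mass balance must be extracted from the shifting analysis with care.
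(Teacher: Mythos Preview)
Your proposal is correct and follows essentially the same route as the paper: Cauchy--Schwarz applied to the Rayleigh quotient, then the local Motzkin--Straus inequality $2\sum_{ij\in E(G)}\frac{\cl(ij)}{\cl(ij)-1}y_iy_j\leq 1$ proved via the minimal-support shifting argument, with the equality analysis yielding the complete multipartite structure, the mass balance $\sum_{v\in V_s}x_v^2=1/r$, and $w(ij)=c\,x_ix_j$. One minor slip: Cauchy--Schwarz equality alone does \emph{not} pin $\cl(ij)$ to a constant (the relation $\tfrac{\cl(ij)-1}{\cl(ij)}w(ij)=\beta x_ix_j$ can hold with varying $\cl(ij)$ if $w(ij)$ compensates)---the constancy $\cl(ij)=r$ comes only after the lemma's equality case gives the complete $r$-partite structure on $\supp(\bm{x})$, which you correctly invoke in the next sentence, so the argument stands.
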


We now present an example to clarify the conditions for equality in \eqref{eq:weighted-local-bound}.

\begin{example}\label{example}
Consider the weighted complete $3$-partite graph $G$ with vertex classes $V_1 = \{v_1\}$, $V_2 = \{v_2,v_3\}$, 
and $V_3 = \{v_4,v_5\}$. The edge weights are indicated by the numbers on the edges (see Figure \ref{fig:example-graph}). 
\begin{figure}[htbp]
\centering
\begin{tikzpicture}[scale=0.8,thick]
\coordinate (v1) at (0,0);
\coordinate (v2) at (-2.4,1.6);
\coordinate (v3) at (2.4,-1.6);
\coordinate (v4) at (2.4,1.6);
\coordinate (v5) at (-2.4,-1.6);
\foreach \i in {1,2,3,4,5}
{
\filldraw (v\i) circle (0.1);
} 
\draw (v1) -- (v2) -- (v4) -- (v3) -- (v5) -- (v2);
\draw (v3) -- (v1) -- (v5);
\draw (v1) -- (v4);
\node[above=2.5pt] at (v1) {$v_1$};
\node[left=2.5pt] at (v2) {$v_2$};
\node[left=2.5pt] at (v5) {$v_5$};
\node[right=2.5pt] at (v4) {$v_4$};
\node[right=2.5pt] at (v3) {$v_3$};
\node[below=2.5pt] at ($(v1)!0.5!(v2)$) {$1$};
\node at (0.65,-0.95) {\small $\sqrt{2}$};
\node at (1.38,0.45) {\small $\sqrt{6}/2$};
\node at (-0.65,-0.9) {\small $\sqrt{6}/2$};
\node[above=1.5pt] at ($(v2)!0.5!(v4)$) {\small $\sqrt{2}/2$};
\node[left=2pt] at ($(v2)!0.5!(v5)$) {\small $\sqrt{2}/2$};
\node[right=2pt] at ($(v3)!0.5!(v4)$) {$1$};
\node[below=2pt] at ($(v3)!0.5!(v5)$) {$1$};
\end{tikzpicture}
\caption{The graph in Example \ref{example}}
\label{fig:example-graph}
\end{figure}
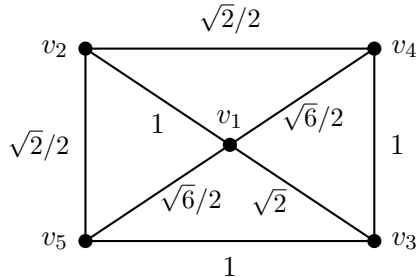
According to Theorem \ref{thm:main}, the spectral radius of $G$ is at most $2\sqrt{3}$. 
We will demonstrate that this upper bound is actually achieved, proving $\lambda (G) = 2\sqrt{3}$.
Set $\bm{w} = \sqrt[4]{3} (1, \sqrt{3}/3, \sqrt{6}/3, \sqrt{2}/2, \sqrt{2}/2)^{\mathrm{T}}$. 
One can directly verify that $\bm{w}$ satisfies item (1). Moreover, $\|\bm{w}\|^2 = 3\sqrt{3}$, 
$\|A(G)\|_F = 3\sqrt{2}$, and $\|\bm{1}_{V_i}\circ\bm{w}\|^2 = \sqrt{3} = \|\bm{w}\|^2 - \sqrt{2/3} \cdot \|A(G)\|_F$ 
for $i\in [3]$. Thus, by Theorem \ref{thm:main}, we conclude that $\lambda (G) = 2\sqrt{3}$.
\end{example}

The original motivation of the above inequality is the following problem.

\begin{problem}\label{problem-1}
Is there an extremal result on weighted graphs which implies Brada\v{c}'s Theorem 
\textup{(}i.e., a localized version of Tur\'an Theorem, see Corollary \ref{coro:Bradc2022}\textup{)}?
\end{problem}

A weaker result regarding Problem \ref{problem-1} can be found in \cite{LiuN26}.

\begin{corollary}[\cite{LiuN26}]\label{coro:Liu-Ning-26}
Let $G$ be a graph with spectral radius $\lambda(G)$. Then 
\begin{equation}\label{eq:local-bound}
\lambda(G) \leq \sqrt{2 \sum_{e\in E(G)} \frac{\cl(e) - 1}{\cl(e)}}.
\end{equation}
Equality holds if and only if $G$ is a complete bipartite graph for $\omega = 2$, or a 
complete regular $\omega$-partite graph for $\omega \geq 3$ \textup{(}possibly with some isolated vertices\textup{)}.
\end{corollary}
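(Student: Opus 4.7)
The plan is to obtain Corollary~\ref{coro:Liu-Ning-26} as an immediate unweighted specialization of Theorem~\ref{thm:main}. Setting $w(e)\equiv 1$ for every $e\in E(G)$ in \eqref{eq:weighted-local-bound} gives the bound \eqref{eq:local-bound} without any further computation, since $w(e)^{2}=1$. Consequently, the entire effort is directed at translating the fairly abstract equality characterization of Theorem~\ref{thm:main} into the concrete statement that extremal graphs are precisely the complete bipartite graphs (when $\omega=2$) or the complete regular $\omega$-partite graphs (when $\omega\ge 3$).

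Suppose equality holds in \eqref{eq:local-bound}. Theorem~\ref{thm:main} supplies a partition $V(G)=V_{1}\cup\cdots\cup V_{r}$ into the independent sets of a complete $r$-partite skeleton, a sign $\epsilon=\pm 1$, and a vector $\bm{w}\in\mathbb{R}^{n}$ with $A(G)=\epsilon\sum_{i=1}^{r}(\bm{1}_{V_{i}}\circ\bm{w})\big((\bm{1}-\bm{1}_{V_{i}})\circ\bm{w}\big)^{T}$ and $\|\bm{1}_{V_{i}}\circ\bm{w}\|^{2}=\|\bm{w}\|^{2}-\sqrt{1-1/r}\,\|A(G)\|_{F}$ for every $i$. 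A direct expansion of the rank-$r$ product shows that its $(u,v)$-entry vanishes whenever $u,v$ lie in the same part, and equals $\epsilon w_{u}w_{v}$ when $u\in V_{i}$ and $v\in V_{j}$ with $i\ne j$. Since $A(G)$ is a $0/1$ matrix whose cross-part entries are all $1$, this forces $w_{u}w_{v}=\epsilon$ for every cross-part pair $(u,v)$.

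The heart of the argument is then a dichotomy between $r\ge 3$ and $r=2$. When $r\ge 3$, the identities $w_{u}w_{v}=w_{u}w_{v'}=\epsilon$ for a fixed $u\in V_{i}$ and for $v,v'$ chosen in two distinct parts different from $V_{i}$ yield $w_{v}=w_{v'}$; iterating this pairing across the partition forces $\bm{w}$ to be constant in absolute value, and after an overall sign we may assume $\bm{w}=\bm{1}$. Condition (2) of Theorem~\ref{thm:main} then becomes $|V_{i}|=n-\sqrt{(r-1)/r}\,\sqrt{2m}$, which is independent of $i$; hence all parts have the same size and $G$ is the complete regular $r$-partite graph. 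When $r=2$, the constraint only pins $\bm{w}$ to be constant on each of the two parts—say with values $\alpha$ on $V_{1}$ and $\epsilon/\alpha$ on $V_{2}$—and condition (2) reduces to $\alpha^{2}=\sqrt{|V_{2}|/|V_{1}|}$, which is solvable for any part sizes. Thus \emph{every} complete bipartite graph attains equality. The converse is a routine verification: $\lambda(K_{a,b})=\sqrt{ab}$ and the right-hand side of \eqref{eq:local-bound} also simplifies to $\sqrt{ab}$, while for a complete regular $r$-partite graph on $rk$ vertices both sides equal $(r-1)k$.

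The main obstacle is exactly this dichotomy: the rank-$r$ decomposition in condition~(1) of Theorem~\ref{thm:main} rigidifies the weight vector only when at least three parts are present, and this is what accounts for the asymmetry in the stated equality cases (\emph{all} complete bipartite graphs for $\omega=2$, but only the \emph{regular} complete multipartite graphs for $\omega\ge 3$).
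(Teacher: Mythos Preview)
Your proposal is correct and follows the same route as the paper: both derive Corollary~\ref{coro:Liu-Ning-26} by specializing Theorem~\ref{thm:main} to $w(e)\equiv 1$. The paper, however, disposes of the entire corollary in a single sentence (``Setting $w(e)=1$ in our main theorem, we immediately obtain Corollary~\ref{coro:Liu-Ning-26}'') and does not spell out how the abstract equality conditions~(1)--(2) of Theorem~\ref{thm:main} collapse to the concrete bipartite/regular-multipartite dichotomy. Your argument fills in exactly this gap, and the dichotomy you identify---that for $r\ge 3$ the cross-part constraints $w_u w_v=\epsilon$ force $\bm{w}$ to be globally constant (hence equal part sizes via condition~(2)), whereas for $r=2$ they only pin $\bm{w}$ to be constant on each side with a one-parameter freedom that accommodates arbitrary part sizes---is precisely the reason for the asymmetry in the statement. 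One tiny sharpening: for $r\ge 3$ the relations actually force $\epsilon=+1$ and $\bm{w}$ to be a constant vector (not merely constant in absolute value), since taking $v\in V_2$, $v'\in V_3$ gives $\beta^2=\epsilon$ with $\beta$ real; your conclusion is unaffected.
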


We emphasize that extending the above result to weighted graphs is essential, 
as it allows our new result to encompass Brada\v{c}'s extremal theorem and some of 
the following results --- something our previous result (Corollary \ref{coro:Liu-Ning-26}) 
could not achieve.

There are also similar phenomenons supporting this idea. For example, Li and Ning \cite{LiN25+} 
showed that the classic Frieze-McDiarmid-Reed Theorem \cite{FMR1992} on paths  and Bondy-Fan Theorem \cite{BF91} 
on cycles in weighted graphs imply Malec-Tompkins Theorem \cite{MT23} on paths, 
and Zhao-Zhang Theorem \cite{ZZ25} on cycles in terms of localized versions, and surely derive shorts proofs.

Theorem \ref{thm:main} unifies a series of previous results on upper bounds of spectral radius and new ones, 
and also some results from extremal graph theory. It should be mentioned that Nikiforov's inequality 
(Corollary \ref{coro:Niki-spectral-Turan-edge}) can imply Stanley's inequality (Corollary \ref{coro:Stanley}) 
via Tur\'an's theorem (see \cite{Nikiforov20}), our result offers a broader scope. 
Specifically, our inequality implies Hong's inequality (Corollary \ref{coro:Hong}) 
and some new spectral inequalities, and leads to some new theorems in extremal graph theory. 
For example, we derive the following theorem:

\begin{theorem}\label{thm:lowerbound-c(e)}
Let $G$ be a graph on $n\geq 2$ vertices with no isolated vertices. Then
\begin{equation}\label{eq:sum_ce_minor_lower_bound}
\sum_{e\in E(G)} \frac{1}{\cl(e) - 1} \geq \frac{n}{2},
\end{equation} 
and
\begin{equation}\label{eq:sum_ce_lower_bound}
\sum_{e\in E(G)} \frac{1}{\cl(e)} \geq \frac{n-c(G)}{2}, 
\end{equation} 
where $c(G)$ is the number of connected components of $G$.
\end{theorem}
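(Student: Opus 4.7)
The plan is to prove the two inequalities via independent combinatorial arguments. For \eqref{eq:sum_ce_minor_lower_bound}, I would use a vertex-by-vertex charging scheme: for each non-isolated vertex $v$, set $H = G[N(v)]$ and observe that $\cl_G(uv) - 1$ equals $\omega_H(u)$, the order of the largest clique of $H$ containing $u$. Picking any maximum clique $K^*$ in $H$ of size $\omega(H)$, every $u \in K^*$ satisfies $\omega_H(u) = \omega(H)$, so $\sum_{u \in K^*} 1/\omega_H(u) = |K^*|/\omega(H) = 1$; the remaining terms being non-negative, this gives $\sum_{u \in N(v)} 1/(\cl(uv)-1) \geq 1$. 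Summing over $v$ and applying the handshake identity $\sum_v \sum_{u \in N(v)} f(uv) = 2\sum_e f(e)$ yields $2\sum_e 1/(\cl(e)-1) \geq n$, which is \eqref{eq:sum_ce_minor_lower_bound}.

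For \eqref{eq:sum_ce_lower_bound}, I would reduce to the connected case by applying the inequality componentwise, and then induct on $n$ via the block decomposition. If a connected $G$ has a cut vertex, pick an end block $B$ with unique cut vertex $u$ and set $G' = G \setminus (V(B) \setminus \{u\})$. The structural key is that cliques of $G$ cannot span two distinct blocks (a clique is $2$-connected or an edge, hence contained in a single block), so $\cl_G(e) = \cl_B(e)$ for every $e \in E(B)$ and $\cl_G(e) = \cl_{G'}(e)$ for every $e \in E(G')$. Since $E(G) = E(B) \sqcup E(G')$ and both $B$ and $G'$ are connected on strictly fewer vertices than $G$, applying the inductive hypothesis to each piece gives $(|V(B)|-1)/2 + (n - |V(B)|)/2 = (n-1)/2$.

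The base case is a $2$-connected $G$. If $G = K_n$, equality holds; if $G$ is triangle-free, then $2$-connectedness forces $m \geq n$, so $\sum_e 1/\cl(e) = m/2 \geq n/2 > (n-1)/2$. The main obstacle is the remaining sub-case: $G$ is $2$-connected, non-complete, with $\omega(G) \geq 3$. For this I plan to strengthen the per-vertex bound from the first part to $\sum_{u \in N(v)} 1/\cl(uv) \geq d(v)/\omega^*(v)$, where $\omega^*(v)$ denotes the order of the largest clique of $G$ containing $v$; this is obtained by the same maximum-clique argument applied to $H = G[N(v)]$ but with denominators $1 + \omega_H(u)$ rather than $\omega_H(u)$. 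The problem then reduces to proving $\sum_v d(v)/\omega^*(v) \geq n - 1$ for connected $G$, which I would attack via a charging argument pairing simplicial vertices (each contributing $1/\omega^*(v)$) against non-simplicial ``boundary'' vertices of maximum cliques, exploiting the fact that in a connected non-complete graph every maximum clique contains a vertex with a neighbor outside it.
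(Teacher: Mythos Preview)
Your argument for \eqref{eq:sum_ce_minor_lower_bound} is correct and genuinely different from the paper's. The paper handles \eqref{eq:sum_ce_minor_lower_bound} by the same induction as \eqref{eq:sum_ce_lower_bound} (removing a vertex whose neighborhood lies in a single maximal clique), whereas your per-vertex charging via $H=G[N(v)]$ and the identity $\cl_G(uv)-1=\omega_H(u)$ gives a clean non-inductive proof. This is a nice simplification for that half of the theorem.

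The plan for \eqref{eq:sum_ce_lower_bound}, however, has a real gap. Your block decomposition is fine, and so are the base cases $G=K_n$ and $G$ triangle-free. But in the remaining case you reduce to the claim
\[
\sum_{v\in V(G)} \frac{d(v)}{\cl(v)} \;\geq\; n-1
\]
for ($2$-)connected $G$, and this inequality is \emph{false}. Take $G$ to be $K_7$ together with an eighth vertex adjacent to exactly two vertices of the $K_7$. This graph is $2$-connected, non-complete, and has $\omega(G)=7\geq 3$, so it falls squarely into your hard sub-case. Here $\cl(v)=7$ and $d(v)\in\{6,7\}$ for the seven clique vertices, while $\cl(8)=3$ and $d(8)=2$, giving
\[
\sum_v \frac{d(v)}{\cl(v)} \;=\; 5\cdot\frac{6}{7}+2\cdot\frac{7}{7}+\frac{2}{3}
\;=\; \frac{146}{21} \;<\; 7 \;=\; n-1.
\]
(For $K_k$ with such an appended vertex the deficit grows with $k$.) The issue is that the bound $\cl(uv)\leq \cl(v)$ throws away too much when $v$ sits in a large clique but the specific edge $uv$ sits in a small one; no charging scheme downstream can recover from this loss, because the inequality you are trying to prove is already false.

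For comparison, the paper's induction does not go through the block tree. It instead isolates a vertex $w$ whose entire neighborhood is a single maximal clique $S$ (such a $w$ exists whenever the ``Property~P'' case $\cl(e)\leq\min\{d(u),d(v)\}$ fails), and tracks exactly how the quantities $\cl(e)$ change on $E(S-w)$ when $w$ is deleted. The key computation is that the gain from the $|S|-1$ edges incident to $w$ exactly offsets the loss on the $\binom{|S|-1}{2}$ internal edges of $S-w$, yielding the increment $\tfrac12$ needed for the induction step.
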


Answering an open problem proposed by Balogh and Lidicky at the Combinatorics, 
Probability and Computing conference held in Oberwolfach in 2022,  
Brada\v{c} \cite{{Bradac2022}} established the inequality 
\[
\sum_{e \in E(G)} \frac{1}{\cl(e)-1} \leq \frac{n^2}{2} - m.
\]
Malec and Tompkins \cite{MT23} proved this result by induction method, independently.
Our result above complements with the upper bound established independently by Brada\v{c} \cite{Bradac2022}, 
and Malec and Tompkins \cite{MT23}.

For a graph $G$ and a vertex $v \in V(G)$, let $\cl_G(v)$ (or simply $\cl(v)$) denote the size of a 
largest clique containing $v$. We also highlight the following result, which provides 
a new generalization of Tur\'an's theorem based on vertex degree and the quantity $\cl(v)$.

\begin{theorem}\label{thm:local-Turan-degree}
Let $G$ be a graph on $n$ vertices. Then
\begin{equation}\label{eq:local-Turan-degree}
\sum_{v\in V(G)} d(v)\cdot\frac{\cl(v) - 1}{\cl(v)}
\leq \Bigg(\sum_{v\in V(G)} \frac{\cl(v) - 1}{\cl(v)}\Bigg)^2. 
\end{equation}
Equality holds if and only if $G$ is, up to isolated vertices, a complete regular
multi-partite graph.
\end{theorem}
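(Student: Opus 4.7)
The plan is to derive \eqref{eq:local-Turan-degree} from Theorem \ref{thm:main} by choosing an edge-weighting on $G$ for which the Theorem \ref{thm:main} upper bound collapses to $\sum_v d(v) f(v)$, and then producing a matching lower bound on the resulting spectral radius via the Rayleigh quotient with an explicit test vector. Throughout I abbreviate $f(v) := (\cl(v)-1)/\cl(v)$, $f(e) := (\cl(e)-1)/\cl(e)$, and $F := \sum_v f(v)$; isolated vertices have $f(v)=0$ and contribute nothing to either side of \eqref{eq:local-Turan-degree}, so I may assume $G$ has no isolated vertex.

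For each edge $e = uv \in E(G)$, I would set $w(e) := \sqrt{(f(u)+f(v))/(2 f(e))}$, so that $f(e)\,w(e)^2 = (f(u)+f(v))/2$. Theorem \ref{thm:main} applied to the weighted graph $(G,w)$ then gives
\[
\lambda(G,w)^2 \leq 2 \sum_{e \in E(G)} f(e)\, w(e)^2 = \sum_{uv \in E(G)}(f(u)+f(v)) = \sum_{v \in V(G)} d(v) f(v).
\]
For the matching lower bound, I would invoke the Rayleigh inequality with the test vector $x_v := \sqrt{f(v)}$, yielding
\[
\lambda(G,w) \geq \frac{2 \sum_{uv \in E(G)} w(uv)\sqrt{f(u) f(v)}}{F}.
\]
The pivotal elementary estimate is the pointwise bound $w(uv)\sqrt{f(u)f(v)} \geq (f(u)+f(v))/2$. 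Since $\cl(e) \leq \min(\cl(u),\cl(v))$ and $x \mapsto (x-1)/x$ is increasing, $f(e) \leq \min(f(u),f(v))$, whence $f(u)f(v)/f(e) \geq \max(f(u),f(v)) \geq (f(u)+f(v))/2$; substituting the definition of $w(uv)$ and squaring the target reduces it to precisely this inequality. Consequently $\lambda(G,w) \geq \sum_v d(v) f(v) / F$, and combining with the upper bound gives $\big(\sum_v d(v) f(v)\big)^2 / F^2 \leq \sum_v d(v) f(v)$, which is \eqref{eq:local-Turan-degree} after clearing denominators.

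The equality analysis requires tracking the three sources of slack. First, the pointwise estimate is tight if and only if $f(u) = f(v) = f(e)$ for every edge $uv$, forcing $\cl(u) = \cl(v) = \cl(e)$ to be a common value $r$ across all non-isolated vertices and edges; in particular $w \equiv 1$, so $(G,w)$ reduces to $G$ unweighted. Second, equality in the Rayleigh step requires that $\sqrt{\bm{f}}$, which by the previous condition is a constant vector, be a Perron vector of $A(G)$, forcing $G$ to be regular on its non-isolated part. Third, equality in Theorem \ref{thm:main} applied to the unweighted $G$ forces $G$ to be a complete $r$-partite graph (plus isolated vertices) together with the rank-structure of Theorem \ref{thm:main}(1)--(2). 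Together these pin down $G$ as a complete regular $r$-partite graph up to isolated vertices. The main obstacle I anticipate is the converse direction: one must verify explicitly that such a $G$, with the uniform vector $\bm{w} = \mathbf{1}$, satisfies conditions (1) and (2) of Theorem \ref{thm:main}, which reduces to the Frobenius computation $\|A(G)\|_F^2 = r(r-1)s^2$ combined with the identity $|V_i| = \|\bm{w}\|^2 - \sqrt{1-1/r}\,\|A(G)\|_F$ for each part.
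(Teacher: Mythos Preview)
Your argument is correct, but it takes a genuinely different route from the paper. The paper bypasses spectral radii entirely and applies the weighted Motzkin--Straus inequality (Lemma~\ref{lem:generalized-MS-inequality}) directly: writing $x_v = f(v) = (\cl(v)-1)/\cl(v)$, one checks that $\sum_v d(v)f(v) = \bm{x}^{\mathrm T}(W(G)\circ A(G))\bm{x}$, and then normalising $\bm{x}$ to lie in $\triangle^{n-1}$ and invoking Lemma~\ref{lem:generalized-MS-inequality} gives the bound in one line; the equality case is read off from the lemma's characterisation. Your approach instead shows that Theorem~\ref{thm:local-Turan-degree} is itself a corollary of Theorem~\ref{thm:main}, via the edge-weighting $w(e)^2 = (f(u)+f(v))/(2f(e))$ together with Rayleigh and the pointwise bound $f(u)f(v)/f(e) \geq \max(f(u),f(v)) \geq (f(u)+f(v))/2$. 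This is pleasingly parallel to the paper's derivation of Corollary~\ref{coro:Bradc2022} from Theorem~\ref{thm:main}, and it makes explicit a dependency (Theorem~\ref{thm:main} $\Rightarrow$ Theorem~\ref{thm:local-Turan-degree}) that the paper's own flowchart does not record. The paper's route is shorter and needs only Lemma~\ref{lem:generalized-MS-inequality}; yours is more roundabout but shows the result sits downstream of the main spectral theorem. For the equality analysis you can streamline the ``main obstacle'' you flag: once $w\equiv 1$ you may invoke Corollary~\ref{coro:Liu-Ning-26} directly rather than re-verifying conditions (1)--(2) of Theorem~\ref{thm:main}, and then Rayleigh equality with the constant test vector forces regularity (handling in particular the $\omega=2$ complete-bipartite case).
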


\begin{remark}
Observe that the difference between the left-hand side and the right-hand side of \eqref{eq:local-Turan-degree} 
decreases monotonically with respect to each $\cl(v)$. Therefore, if $G$ is $K_{r+1}$-free, then $\cl(v) \leq r$ 
for all $v \in V(G)$, which implies $e(G) \leq (1 - 1/r) n^2/2$. 
\end{remark}

We thus arrive at the following theorem, which provides a local refinement of Wilf’s theorem \cite{Wilf1986} 
and follows from Corollary \ref{coro:Liu-Ning-26} and Theorem \ref{thm:local-Turan-degree}.

\begin{theorem}[Localized version of Wilf's theorem]\label{thm:spectral-localized-Turan-vertex}
Let $G$ be a graph with spectral radius $\lambda(G)$. Then
\[
\lambda(G)\leq \sum_{v\in V(G)}\frac{\cl(v)-1}{\cl(v)}.
\]
Equality holds if and only if $G$ is, up to isolated vertices, a regular complete multi-partite graph.
\end{theorem}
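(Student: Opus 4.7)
\medskip

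The plan is to derive the stated localized Wilf bound by chaining the two results flagged by the authors: Corollary~\ref{coro:Liu-Ning-26} (the edge-clique square-root bound on $\lambda(G)$) and Theorem~\ref{thm:local-Turan-degree} (the vertex-degree local Tur\'an inequality). The only thing that needs to be inserted between them is a simple edge-to-vertex comparison that converts $\sum_{e} \frac{\cl(e)-1}{\cl(e)}$ into a quantity bounded above by $\sum_v d(v)\,\frac{\cl(v)-1}{\cl(v)}$, after which the three bounds stack into the desired one.

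First I would record the monotonicity observation that if $e=uv\in E(G)$ then any maximum clique through $e$ is also a clique through each of $u$ and $v$, so $\cl(e)\leq \min\{\cl(u),\cl(v)\}$, and hence $\frac{\cl(e)-1}{\cl(e)}\leq \frac{\cl(u)-1}{\cl(u)}$ and likewise for $v$. Summing the two gives
\[
2\sum_{e=uv\in E(G)} \frac{\cl(e)-1}{\cl(e)}
\;\leq\; \sum_{uv\in E(G)}\!\left(\frac{\cl(u)-1}{\cl(u)}+\frac{\cl(v)-1}{\cl(v)}\right)
\;=\; \sum_{v\in V(G)} d(v)\cdot \frac{\cl(v)-1}{\cl(v)}.
\]
Combining this with Theorem~\ref{thm:local-Turan-degree} yields
$2\sum_{e}\frac{\cl(e)-1}{\cl(e)}\leq \bigl(\sum_v\frac{\cl(v)-1}{\cl(v)}\bigr)^2$, and taking square roots after applying Corollary~\ref{coro:Liu-Ning-26} delivers the stated inequality.

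For the equality discussion, I would argue that equality in Theorem~\ref{thm:spectral-localized-Turan-vertex} forces equality in all three steps simultaneously. Equality in Corollary~\ref{coro:Liu-Ning-26} restricts $G$ (up to isolated vertices) to either a complete bipartite graph (when $\omega(G)=2$) or a regular complete $r$-partite graph with $r\geq 3$. Equality in the edge-to-vertex comparison above requires $\cl(u)=\cl(v)=\cl(e)$ for every edge, which holds automatically in any complete multipartite graph. Finally, equality in Theorem~\ref{thm:local-Turan-degree} by its stated characterization forces $G$ to be a regular complete multipartite graph; in the $\omega=2$ case this is exactly what rules out unbalanced $K_{a,b}$. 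Conversely, a direct check on a balanced complete $r$-partite graph on $n=rk$ vertices (where $\lambda(G)=(r-1)k$, $d(v)=(r-1)k$, $\cl(v)=\cl(e)=r$) shows every inequality above is tight, completing the characterization.

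The main obstacle is nothing deep but purely bookkeeping: one needs to verify that the equality case of Theorem~\ref{thm:local-Turan-degree} really excludes unbalanced complete bipartite graphs (which would otherwise survive the equality case of Corollary~\ref{coro:Liu-Ning-26}), so that the two characterizations intersect exactly in the class of regular complete multipartite graphs. This is resolved by the explicit $K_{a,b}$ computation $4ab=(a+b)^2\Leftrightarrow a=b$, which matches the equality statement of Theorem~\ref{thm:local-Turan-degree} and pins down the extremal family.
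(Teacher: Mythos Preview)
Your proposal is correct and follows essentially the same route as the paper: the paper proves this result by combining Corollary~\ref{coro:spectral-vertex-degree-localized} (which is exactly Corollary~\ref{coro:Liu-Ning-26} plus your edge-to-vertex comparison $\cl(e)\leq\min\{\cl(u),\cl(v)\}$) with Theorem~\ref{thm:local-Turan-degree}, precisely as you chain them. Your equality analysis is in fact more explicit than the paper's, correctly using the equality case of Theorem~\ref{thm:local-Turan-degree} to eliminate unbalanced complete bipartite graphs that would otherwise survive Corollary~\ref{coro:Liu-Ning-26}.
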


Theorem \ref{thm:spectral-localized-Turan-vertex} confirms a conjecture proposed by Tripathi \cite{T25} 
independently of us. In fact, this conjecture originated from research in Probability Theory and 
Operator Algebras, and Tripathi \cite{T25} told us that our result have applications to Khintchine-type inequalities, 
specifically in bounding the operator norms of sums for mixtures
of free and tensor-independent semicircle random variables.

Recent progress on Khintchine-type inequalities reveals a close connection between 
spectral graph theory, non-commutative probability, and combinatorial group theory. 
In particular, Collins and Miyagawa \cite{Collins2025} established operator-norm 
bounds for sums of semicircular random variables with mixed classical and free 
independence in terms of the largest eigenvalues of an associated graph, 
thereby providing analogs of a classic operator-valued Khintchine-type inequality 
obtained by Haagerup and Pisier \cite{Haagerup-Pisier1993}. Building on this, Santos, 
Tripathi and Youssef \cite{Santos-Tripathi-Youssef2025} addressed a more general setting 
based on $G$-independence, showing that operator-norm bounds can be expressed through combinatorial quantities $\cl(v)$,
and that extremal norm behavior can be formulated and resolved via Tur\'an-type 
problems. Collectively, these results highlight that the norm bounds of mixtures 
of free and classical independence can be effectively controlled by graph 
parameters\,--\,specifically the spectral radius, clique sizes, and structural 
extremizers. This suggests that spectral Tur\'an-type results may play a crucial 
role in understanding operator norms of sums of semicircular variables 
in intermediate regimes.

{\bfseries Notation.} For a graph $G$ of order $n$, let $\chi(G)$ and $\omega(G)$ 
denote its chromatic number and clique number, abbreviated as $\chi$ and $\omega$, 
respectively. For a subset $X\subseteq V(G)$, let $G[X]$ be the subgraph of $G$ 
induced by $X$, and $e(X)$ be the number of edges in $G[X]$. We also use $e(G)$ 
to denote the number of edges of $G$. For a vertex $v$ of $G$, we use $d_G(v)$ 
and $N_G(v)$ to denote the degree of $v$ and the set of neighbors of $v$ in $G$, 
respectively; we simplify these to $d(v)$ and $N(v)$ when the context is clear. 
For graph notation and terminology not defined here, we refer to \cite{Bondy-Murty2008}.

{\bfseries Outline.} We prove Theorem \ref{thm:main} in Section \ref{sec:proof-main}, 
relying on a new weighted extension of the Motzkin--Straus 
Theorem with weights based on $\cl(v)$. Subsequently, we provide the proofs of 
Corollary \ref{coro:Liu-Ning-26} and Corollary \ref{thm:AC}\,--\,Corollary \ref{coro:weighted-Cvetkovic}, 
as well as the proof of Theorem \ref{thm:local-Turan-degree} 
in Section \ref{sec:proofs}. Finally, Section \ref{sec:proofs-technical-results} is devoted 
to two technical results; a key one, Theorem \ref{thm:lowerbound-c(e)}, is used to show 
that our inequality implies Hong's inequality.

\section{Consequences}\label{Sec:2}
What's the meaning of a localized version of extremal theorem in Corollary \ref{coro:Liu-Ning-26}? 
In extremal graph theory, a central problem is to determine the maximum number of edges, denoted 
by $\ex(n, H)$, in an $n$-vertex graph that does not contain a forbidden subgraph $H$. A classical 
result in this area is Tur\'an's theorem, which resolves the case when $H=K_{r+1}$, showing that 
the unique extremal graph is the $r$-partite Tur\'an graph. This imposes a \emph{global} 
constraint, restricting the total number of edges. A more refined approach considers \emph{local} 
constraints. Instead of forbidding the presence of a $K_{r+1}$ globally, one may study local 
conditions, such as bounding a function defined over the edges --- for example, 
$\sum_{e \in E(G)} \frac{\cl(e)}{\cl(e)-1}$. This leads to a \emph{localized} version of the 
Tur\'an problem, offering a finer perspective on extremal graph properties.

Recall the following result due to Brada\v{c} \cite{Bradac2022}, 
and Malec and Tompkins \cite{MT23}, independently.
Applying Theorem \ref{thm:main} allows for a very concise proof of Corollary \ref{coro:Bradc2022}, 
which differs from the inductive proof in \cite{MT23}.

\begin{corollary}[\cite{Bradac2022,MT23}]\label{coro:Bradc2022}
Let $G$ be any $n$-vertex graph. Then
\[
\sum_{e\in E(G)}\frac{\cl(e)}{\cl(e)-1}\leq \frac{n^2}{2}.
\]
\end{corollary}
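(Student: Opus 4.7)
The plan is to apply Theorem \ref{thm:main} to the weighted graph $G'$ with the same underlying graph as $G$ but with edge weights $w(e) = \cl(e)/(\cl(e)-1)$. This is well defined since $\cl(e)\ge 2$ (the endpoints of $e$ already form a $K_2$), and the underlying graph is unchanged, so $\cl_{G'}(e) = \cl_G(e)$ for every edge. The choice of weight is engineered precisely so that the factor $\frac{\cl(e)-1}{\cl(e)} w(e)^2$ appearing on the right of \eqref{eq:weighted-local-bound} collapses to $\frac{\cl(e)}{\cl(e)-1}$. Theorem \ref{thm:main} then yields
\[
\lambda(G')^2 \;\leq\; 2 \sum_{e\in E(G)} \frac{\cl(e)}{\cl(e)-1}.
\]

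For a matching lower bound on $\lambda(G')$, I would test the Rayleigh quotient of the symmetric nonnegative matrix $A(G')$ against the all-ones vector $\bm{1}\in\mathbb{R}^n$. This gives
\[
\lambda(G') \;\geq\; \frac{\bm{1}^{\mathrm{T}} A(G') \bm{1}}{\bm{1}^{\mathrm{T}} \bm{1}} \;=\; \frac{2}{n}\sum_{e\in E(G)} \frac{\cl(e)}{\cl(e)-1}.
\]
Writing $S := \sum_{e\in E(G)} \cl(e)/(\cl(e)-1)$ and squaring, the combination of the two estimates is $4S^2/n^2 \leq \lambda(G')^2 \leq 2S$, from which $S \leq n^2/2$ follows at once; the case $S=0$ (i.e.\ $G$ has no edges) is trivial.

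There is no substantive obstacle once the weights are identified: Theorem \ref{thm:main} is tailored to deliver a quadratic-in-$S$ upper bound on $\lambda(G')^2$, and the test vector $\bm{1}$ is tailored to deliver a linear-in-$S$ lower bound on $\lambda(G')$, so the two inequalities are designed to close against each other. The only real insight is choosing $w(e) = \cl(e)/(\cl(e)-1)$ so that the weighted quantity appearing in \eqref{eq:weighted-local-bound} is exactly the target sum on the left-hand side of Corollary \ref{coro:Bradc2022}.
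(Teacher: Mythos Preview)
Your proof is correct and is essentially identical to the paper's own argument: both assign the weight $w(e)=\cl(e)/(\cl(e)-1)$, bound $\lambda(G')$ above via Theorem~\ref{thm:main} and below via Rayleigh's inequality with the all-ones vector, and then combine $2S/n \leq \lambda(G') \leq \sqrt{2S}$ to conclude $S\leq n^2/2$.
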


\begin{proof}
For the graph $G$, we assign to each edge $e\in E(G)$ a weight $w(e) = \frac{\cl(e)}{\cl(e)-1}$, 
and let $G_w$ denote the resulting weighted graph. The total edge weight of $G_w$ is then 
\[
W := \sum_{e\in E(G)} \frac{\cl(e)}{\cl(e)-1}.
\]
By Rayleigh's inequality, $\lambda (G_w)\geq 2W/n$. Combining this with Theorem \eqref{thm:main} 
and $w(e) = \frac{\cl(e)}{\cl(e) - 1}$, we obtain
\[
\frac{2}{n} \sum_{e\in E(G)} \frac{\cl(e)}{\cl(e)-1} = \frac{2W}{n} 
\leq\lambda(G_w)\leq\sqrt{2 \sum_{e\in E(G)} \frac{\cl(e) - 1}{\cl(e)} w(e)^2}.
\]
Solving the above inequality yields the desired result.
\end{proof}

Brada\v{c} \cite{Bradac2022} was the first to notice the relationship between the Motzkin-Straus 
result and localized version of Tur\'an-type problem. This line of inquiry was 
later developed systematically by Malec and Tompkins \cite{MT23}, who explored 
localized versions of several extremal problems, including the localized version of 
Erd\H{o}s-Gallai Theorem on paths, and Localized EKR Theorem. 
Zhao and Zhang \cite{ZZ25} established a localized Erd\H{o}s-Gallai Theorem on cycles. In addition, they
proved a localized version of the Erd\H{o}s-Gallai theorem for hypergraphs \cite{ZZ25-2}. Very recently, Li and Ning \cite{LiN25+} 
proved localized version of Erd\H{o}s-Gallai Theorem on matchings and Balister-Bollob\'as-Riordan-Schelp  
Theorem \cite{BBRS2003} on paths.

Another consequence due to Adak and Chandran is the following. We present 
a new and short proof in Section \ref{sec:proofs}.

\begin{corollary}[Adak and Chandran \cite{AC-25arxiv}]\label{thm:AC}
Let $G$ be a graph on $n$ vertices and $m$ edges. Then
\[
m\leq \frac{n}{2} \sum_{v\in V(G)}\frac{\cl(v)-1}{\cl(v)}.
\]
Equality holds if and only if $G$ is either a regular
complete multi-partite graph or an empty graph.
\end{corollary}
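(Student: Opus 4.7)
The plan is to deduce Corollary \ref{thm:AC} by sandwiching the spectral radius $\lambda(G)$ between the classical Rayleigh lower bound and the localized Wilf-type upper bound of Theorem \ref{thm:spectral-localized-Turan-vertex}. Evaluating the Rayleigh quotient at the all-ones vector $\mathbf{1}\in\mathbb{R}^n$ gives
\[
\lambda(G) \;\geq\; \frac{\mathbf{1}^{\mathrm{T}} A(G)\, \mathbf{1}}{\mathbf{1}^{\mathrm{T}} \mathbf{1}} \;=\; \frac{2m}{n},
\]
while Theorem \ref{thm:spectral-localized-Turan-vertex} supplies the matching upper bound $\lambda(G) \leq \sum_{v \in V(G)} \frac{\cl(v) - 1}{\cl(v)}$. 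Chaining these two estimates and multiplying through by $n/2$ immediately produces the claimed inequality $m \leq \frac{n}{2}\sum_{v \in V(G)} \frac{\cl(v) - 1}{\cl(v)}$.

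For the equality case, both inequalities in the chain must be tight simultaneously. Equality in the Rayleigh step forces $\mathbf{1}$ to be a Perron eigenvector of $A(G)$, which is equivalent to $G$ being $d$-regular for some $d \geq 0$. Equality in Theorem \ref{thm:spectral-localized-Turan-vertex} characterizes $G$ as a regular complete multi-partite graph together with a (possibly empty) set of isolated vertices. Reconciling these two requirements: if $d \geq 1$, no isolated vertex can be present (else $G$ would fail to be regular), so $G$ is itself a regular complete multi-partite graph; if $d = 0$, every vertex of $G$ is isolated, so $G$ is the empty graph. A direct check confirms that both families of graphs do realize equality, completing the classification.

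I do not expect any substantive obstacle here: the inequality itself is essentially one line once Theorem \ref{thm:spectral-localized-Turan-vertex} is in hand, and the only slightly delicate step is the short case analysis needed to reconcile the two equality conditions, which cleanly splits on whether $G$ has any edge at all.
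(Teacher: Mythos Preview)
Your proposal is correct and matches the paper's approach exactly: the paper's proof of Corollary~\ref{thm:AC} is the one-line remark that it ``follows from Corollary~\ref{coro:spectral-localized-Turan-vertex} and the well-known spectral inequality that the spectral radius of a graph bounds its average degree from below,'' which is precisely the Rayleigh--Wilf sandwich you spell out. Your equality analysis is also correct and in fact more detailed than what the paper provides.
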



Our main results has several applications in spectral graph theory. Recall that
in 1976, Brualdi and Hoffman posed a classical problem concerning the maximum eigenvalue of a 
$(0,1)$-matrix with a prescribed number of edges (see \cite[pp.~438]{BFLS1978}). Partial progress 
was made by Friedland \cite{Friedlan1985} and Stanley \cite{Stanley1987}, and the problem was 
later completely resolved by Rowlinson \cite{Rowlinson1988}. A detailed discussion of these 
foundational contributions can be found in a recent paper \cite{Cheng-Weng2025}. Of particular note, 
Stanley's method led to the following elegant spectral inequality.

\begin{corollary}[{Stanley \cite[pp. 268]{Stanley1987}}]\label{coro:Stanley}
Let $G$ be a graph on $m$ edges. Then
\[
\lambda(G)\leq -\frac{1}{2}+\sqrt{2m+\frac{1}{4}}.
\]   
\end{corollary}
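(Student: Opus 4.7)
The plan is to derive Stanley's bound, which is equivalent to $\lambda(G)(\lambda(G)+1) \le 2m$, by combining our spectral inequality (Corollary \ref{coro:Liu-Ning-26}) with the lower bound on edge--clique sums in Theorem \ref{thm:lowerbound-c(e)}. Roughly, Corollary \ref{coro:Liu-Ning-26} supplies the ``$\lambda^2$ part'' of Stanley's quadratic bound, while a trivial spectral estimate supplies the extra linear term ``$\lambda$'', and Theorem \ref{thm:lowerbound-c(e)} is exactly what glues them together.

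In detail, first I would remove any isolated vertices from $G$; this affects neither $\lambda(G)$ nor $m$, and it makes Theorem \ref{thm:lowerbound-c(e)} directly applicable. For the resulting graph I would rewrite
\[
\sum_{e\in E(G)} \frac{\cl(e)-1}{\cl(e)} = m - \sum_{e\in E(G)} \frac{1}{\cl(e)},
\]
and invoke \eqref{eq:sum_ce_lower_bound} to obtain $\sum_{e} \tfrac{1}{\cl(e)} \ge (n - c(G))/2$. Substituting into Corollary \ref{coro:Liu-Ning-26} then yields
\[
\lambda(G)^2 \le 2m - (n - c(G)).
\]

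Next I would bound $\lambda(G)$ by $n - c(G)$ directly. Decomposing $G$ into its connected components $G_1,\ldots,G_c$ with $|V(G_i)|=n_i\ge 2$, one has $\lambda(G)=\max_i \lambda(G_i) \le \max_i(n_i-1)\le \sum_i(n_i-1) = n - c(G)$. Adding this to the previous inequality gives $\lambda(G)^2+\lambda(G)\le 2m$, and solving the resulting quadratic in $\lambda(G)$ produces exactly $\lambda(G)\le -\tfrac{1}{2}+\sqrt{2m+\tfrac{1}{4}}$.

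The argument is essentially frictionless once the ingredients are assembled; no step presents a real obstacle. The only tactical point worth noting is the bookkeeping about isolated vertices, which is needed to invoke Theorem \ref{thm:lowerbound-c(e)} cleanly and to ensure each $n_i\ge 2$ in the trivial component bound. Conceptually, the two quantitative reductions---from $2m$ to $2\sum_{e}(\cl(e)-1)/\cl(e)$ in Corollary \ref{coro:Liu-Ning-26}, and from $m$ to $m-\sum_{e} 1/\cl(e)$ via Theorem \ref{thm:lowerbound-c(e)}---combine to account precisely for the linear term in Stanley's inequality, which explains why our refinement encompasses Stanley's bound while Nikiforov's original inequality only implied it indirectly through Tur\'an's theorem.
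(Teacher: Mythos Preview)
Your proof is correct, but it follows a different route from the paper's own derivation. The paper proves Stanley's bound by passing through the vertex-degree localized inequality (Corollary~\ref{coro:spectral-vertex-degree-localized}): from $\lambda(G)^2 \le 2m - \sum_v d(v)/\cl(v)$ it uses the crude estimate $\cl(v)\le d(v)+1$ and then appeals to the standalone technical Lemma~\ref{Lemma:Main}, a degree-sequence inequality whose proof is a somewhat lengthy induction with several case distinctions. Your argument instead stays at the edge level, combining Corollary~\ref{coro:Liu-Ning-26} with the edge--clique lower bound \eqref{eq:sum_ce_lower_bound} of Theorem~\ref{thm:lowerbound-c(e)} to get $\lambda(G)^2 \le 2m - (n - c(G))$, and then adds the trivial component bound $\lambda(G)\le n - c(G)$ to obtain $\lambda(\lambda+1)\le 2m$.

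What your approach buys is economy: you avoid Lemma~\ref{Lemma:Main} entirely and make transparent that Stanley follows from the \emph{same} two ingredients the paper uses to derive Hong's inequality (Corollary~\ref{coro:Hong}); indeed your intermediate bound $\lambda^2 \le 2m-(n-c)$ is precisely a disconnected version of Hong. What the paper's route buys is a different structural message: it shows Stanley already follows from the vertex-localized spectral bound together with a purely degree-theoretic fact, without invoking Theorem~\ref{thm:lowerbound-c(e)} at all. Both are valid; yours is the shorter path within the framework of this paper.
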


Among the extensions of Stanley's inequality is the following result by Hong,
which implies a non-trivial upper bound of spectral radius of planar graphs.

\begin{corollary}[{Hong \cite[pp.~69]{Hong1993}}]\label{coro:Hong}
Let $G$ be a graph on $n$ vertices and $m$ edges. If $\delta(G)\geq 1$ then
\[
\lambda(G)\leq \sqrt{2m-n+1}.
\]    
\end{corollary}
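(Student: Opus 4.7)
The plan is to deduce Hong's inequality by combining Theorem~\ref{thm:main} (with all edge weights equal to $1$) and the lower bound of Theorem~\ref{thm:lowerbound-c(e)} on $\sum_{e\in E(G)} 1/\cl(e)$. Specifically, applying Theorem~\ref{thm:main} to the unweighted graph $G$ and splitting $(\cl(e)-1)/\cl(e) = 1 - 1/\cl(e)$ gives
\[
\lambda(G)^2 \leq 2\sum_{e\in E(G)} \frac{\cl(e)-1}{\cl(e)} = 2m - 2\sum_{e\in E(G)} \frac{1}{\cl(e)}.
\]
This identity is the key point that Nikiforov's earlier inequality (bounding $(\cl(e)-1)/\cl(e) \leq (r-1)/r$) could not exploit, and it is precisely where Theorem~\ref{thm:lowerbound-c(e)} enters to close the gap.

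Next I would reduce to the connected case. Since the spectral radius of $G$ equals the maximum spectral radius among its connected components $G_1,\ldots,G_{c(G)}$, and since $\delta(G)\geq 1$ forces $m_i \geq n_i/2$ for each component $G_i$ (hence $2(m-m_i) \geq n - n_i$), any per-component bound of the form $\lambda(G_i)^2 \leq 2m_i - n_i + 1$ automatically upgrades to $\lambda(G)^2 \leq 2m - n + 1$. Thus it suffices to treat a connected graph $G$ with $n\geq 2$ vertices.

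For such a connected $G$, Theorem~\ref{thm:lowerbound-c(e)} with $c(G)=1$ yields $\sum_{e\in E(G)} \tfrac{1}{\cl(e)} \geq \tfrac{n-1}{2}$, which substituted into the display above gives
\[
\lambda(G)^2 \leq 2m - (n-1) = 2m - n + 1,
\]
and taking square roots finishes the proof. The only genuine content to verify is the component-wise reduction, which is a short calculation; the main conceptual obstacle, namely producing the lower bound $\sum_e 1/\cl(e) \geq (n-c(G))/2$, is already handled by Theorem~\ref{thm:lowerbound-c(e)}. This is precisely the ``bridging'' alluded to in the introduction: Theorem~\ref{thm:main} contributes the upper bound on $\lambda(G)^2$ in terms of $\sum_e (\cl(e)-1)/\cl(e)$, while Theorem~\ref{thm:lowerbound-c(e)} supplies the complementary lower bound on $\sum_e 1/\cl(e)$ needed to convert it into Hong's form.
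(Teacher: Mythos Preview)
Your proof is correct and follows essentially the same route as the paper: both arguments reduce to a connected component via the observation that $2m_i \geq n_i$ when $\delta(G)\geq 1$, apply the unweighted case of Theorem~\ref{thm:main} (i.e., Corollary~\ref{coro:Liu-Ning-26}) to obtain $\lambda^2 \leq 2m - 2\sum_e 1/\cl(e)$, and then invoke Theorem~\ref{thm:lowerbound-c(e)} on the connected piece to get $\sum_e 1/\cl(e) \geq (n-1)/2$. The only cosmetic difference is that the paper applies the inequality to the single component of maximal spectral radius and adds back the nonnegative quantities $2e(G_i)-|V(G_i)|$ for the remaining components, whereas you phrase it as proving the bound per component and then upgrading; these are the same computation.
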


By imposing minimum degree as a new parameter, Hong's inequality was further improved independently 
by Hong, Shu, and Fang \cite{HongShuFang2001}, and by Nikiforov \cite{Nikiforov2002}. Specifically, 
if $G$ is a connected graph on $n$ vertices and $m$ edges with minimum degree $\delta (G)\geq 1$, then 
\[
\lambda(G)\leq \frac{\delta (G) - 1}{2}+\sqrt{2m - \delta (G) n + \frac{(\delta (G) + 1)^2}{4}}.
\]
In another direction, confirming a conjecture of Edwards and Elphick \cite{EE1983}, and improving 
two classic spectral inequalities of Wilf \cite{Wilf1967,Wilf1986}, Nikiforov \cite{Nikiforov2002} 
proved the following spectral inequality.

\begin{corollary}[{Nikiforov \cite[Theorem 2.1]{Nikiforov2002}}]\label{coro:Niki-spectral-Turan-edge}
Let $G$ be a graph on $m$ edges. If $G$ is $K_{r+1}$-free, then 
\[
\lambda(G)\leq \sqrt{2\Big(1 - \frac{1}{r}\Big) m}.
\]
\end{corollary}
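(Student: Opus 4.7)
The plan is to deduce Corollary \ref{coro:Niki-spectral-Turan-edge} directly from the edge-local bound in Corollary \ref{coro:Liu-Ning-26} (equivalently, the unweighted case of Theorem \ref{thm:main}), with the $K_{r+1}$-freeness entering only through a trivial per-edge inequality.

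The key observation is that the hypothesis ``$G$ is $K_{r+1}$-free'' is purely a local statement about edges: no edge of $G$ is contained in a clique of order $r+1$, so $\cl(e) \leq r$ for every $e \in E(G)$. Since the function $t \mapsto (t-1)/t = 1 - 1/t$ is monotonically increasing on $t \geq 1$, this yields
\[
\frac{\cl(e) - 1}{\cl(e)} \leq 1 - \frac{1}{r} \qquad \text{for every } e \in E(G).
\]
Summing over all $m$ edges gives $\sum_{e \in E(G)} \frac{\cl(e)-1}{\cl(e)} \leq \bigl(1 - \tfrac{1}{r}\bigr) m$. Substituting this bound into the inequality of Corollary \ref{coro:Liu-Ning-26} immediately produces
\[
\lambda(G) \leq \sqrt{2 \sum_{e \in E(G)} \frac{\cl(e)-1}{\cl(e)}} \leq \sqrt{2\Bigl(1 - \frac{1}{r}\Bigr) m},
\]
which is exactly the conclusion of the corollary.

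There is essentially no obstacle in this step; the entire weight of the argument sits inside Corollary \ref{coro:Liu-Ning-26} (and thus Theorem \ref{thm:main}), whose proof carries the real difficulty via the weighted Motzkin--Straus extension. The present deduction simply exploits the fact that an edge-local upper bound automatically dominates any global $K_{r+1}$-free hypothesis, because such a hypothesis is itself edge-local. If one wishes, the equality discussion comes for free: equality forces $\cl(e) = r$ for every edge and forces the extremal structure of Corollary \ref{coro:Liu-Ning-26}, which together pin down the balanced complete $r$-partite graph, recovering Nikiforov's characterization of the Tur\'an graph $T(n,r)$ as the unique extremizer.
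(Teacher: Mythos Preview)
Your proof is correct and is essentially identical to the paper's: the paper states that Corollary~\ref{coro:Liu-Ning-26} implies Corollary~\ref{coro:Niki-spectral-Turan-edge} since $\cl(e)\le r$ for every edge $e$, which is exactly your argument. One small caveat on your optional closing remark about equality: for $r=2$ the equality case of Corollary~\ref{coro:Liu-Ning-26} allows any complete bipartite graph (not only the balanced one), so the extremizer is not uniquely the Tur\'an graph in that case.
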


Only very recently, the authors \cite{LiuN26} proved a localized version of spectral 
Tur\'an theorem (Corollary \ref{coro:Liu-Ning-26}). In this paper, we also present four 
new results in this direction.

\begin{corollary}\label{coro:spectral-vertex-degree-localized}
Let $G$ be a graph with spectral radius $\lambda(G)$. Then
\begin{equation}\label{eq:local-bound-vertex}
\lambda(G) \leq \sqrt{\sum_{v\in V(G)} d(v)\cdot \frac{\cl(v)-1}{\cl(v)}}.
\end{equation}
Equality holds if and only if $G$ is, up to isolated vertices, a complete bipartite graph 
for $\omega(G) = 2$, or a regular complete $\omega(G)$-partite graph for $\omega(G) \geq 3$.
\end{corollary}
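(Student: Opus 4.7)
The plan is to deduce this corollary directly from Corollary \ref{coro:Liu-Ning-26} (which is itself a consequence of Theorem \ref{thm:main} with unit weights) via a simple edge-to-vertex comparison. First I would invoke Corollary \ref{coro:Liu-Ning-26} to obtain
\[
\lambda(G)^2 \leq 2\sum_{e\in E(G)} \frac{\cl(e)-1}{\cl(e)}.
\]
Next I would observe the key monotonicity: for every edge $e=uv$, any clique containing $e$ contains both $u$ and $v$, so $\cl(e)\leq\min(\cl(u),\cl(v))$. Since $t\mapsto (t-1)/t$ is increasing on positive reals, this gives $\frac{\cl(e)-1}{\cl(e)}\leq\frac{\cl(u)-1}{\cl(u)}$ and likewise for $v$. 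Summing over edges yields
\[
2\sum_{uv\in E(G)}\frac{\cl(e)-1}{\cl(e)} \leq \sum_{uv\in E(G)}\left(\frac{\cl(u)-1}{\cl(u)}+\frac{\cl(v)-1}{\cl(v)}\right) = \sum_{v\in V(G)} d(v)\cdot\frac{\cl(v)-1}{\cl(v)},
\]
where the final identity is a standard edge-vertex double counting. Chaining the two inequalities gives the bound \eqref{eq:local-bound-vertex}.

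For the equality characterization, I would analyze when both inequalities are tight simultaneously. Tightness in the first inequality, by the equality case of Corollary \ref{coro:Liu-Ning-26}, forces $G$ to be, up to isolated vertices, a complete bipartite graph when $\omega(G)=2$ or a regular complete $\omega(G)$-partite graph when $\omega(G)\geq 3$. Tightness in the second inequality requires $\cl(e)=\cl(u)=\cl(v)$ for every edge $e=uv$. I would then verify that in any complete multipartite graph, every non-isolated vertex satisfies $\cl(v)=\omega(G)$ and every edge satisfies $\cl(e)=\omega(G)$ (one picks a representative from each of the remaining parts to extend any edge to a maximum clique), so the second equality condition is automatic whenever the first holds. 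Therefore the equality characterizations from the two steps align exactly with the conditions stated in the corollary.

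Because the whole argument is a two-line reduction from Corollary \ref{coro:Liu-Ning-26}, there is no substantive technical obstacle; the only point deserving care is confirming that the equality cases mesh cleanly, which they do precisely because every non-isolated vertex and every edge of a complete multipartite graph "sees" a clique of the maximum possible size $\omega(G)$.
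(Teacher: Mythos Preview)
Your proposal is correct and follows essentially the same approach as the paper: apply Corollary \ref{coro:Liu-Ning-26}, use the monotonicity $\cl(e)\leq\min\{\cl(u),\cl(v)\}$ together with the fact that $t\mapsto (t-1)/t$ is increasing to pass from the edge sum to the vertex-degree sum via double counting, and then check that the two equality conditions mesh because in a complete multipartite graph every non-isolated vertex and every edge attains $\cl(\cdot)=\omega(G)$.
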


Since $\cl(v) \leq n$ holds for each vertex $v \in V(G)$, Wilf's inequality, 
$\lambda(G) \leq \sqrt{2(1 - 1/n)m}$ (see \cite[Corollary,~pp.~331]{Wilf1967}), is an immediate 
corollary of inequality \eqref{eq:local-bound-vertex}. This connection 
motivates our next result, a local refinement of another inequality due to Wilf \cite{Wilf1986}, 
which is formally stated as Theorem \ref{thm:spectral-localized-Turan-vertex} in Section \ref{sec:1}.

\begin{corollary}[Localized version of Wilf's theorem]\label{coro:spectral-localized-Turan-vertex}
Let $G$ be a graph with spectral radius $\lambda(G)$. Then
\begin{align*}
\lambda(G)\leq \sum_{v\in V(G)}\frac{\cl(v)-1}{\cl(v)}.
\end{align*}
Equality holds if and only if $G$ is, up to isolated vertices, a regular complete multi-partite graph.
\end{corollary}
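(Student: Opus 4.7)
The plan is to chain two of the results already established in the excerpt. First, I invoke Corollary \ref{coro:spectral-vertex-degree-localized} to replace the spectral radius by a vertex-degree sum:
\[
\lambda(G)^2 \leq \sum_{v\in V(G)} d(v)\cdot\frac{\cl(v)-1}{\cl(v)}.
\]
This bound is itself a quick deduction from Corollary \ref{coro:Liu-Ning-26}: since $x \mapsto 1-1/x$ is monotone and $\cl(e) \leq \min\{\cl(u),\cl(v)\}$ for every edge $e=uv$, one has $2\sum_{e\in E(G)}\frac{\cl(e)-1}{\cl(e)} \leq \sum_{e=uv}\bigl(\frac{\cl(u)-1}{\cl(u)}+\frac{\cl(v)-1}{\cl(v)}\bigr) = \sum_{v} d(v)\frac{\cl(v)-1}{\cl(v)}$. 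I then apply Theorem \ref{thm:local-Turan-degree} to dominate this vertex-degree sum by the square of $\sum_{v} \frac{\cl(v)-1}{\cl(v)}$, and take square roots to obtain $\lambda(G) \leq \sum_{v\in V(G)} \frac{\cl(v)-1}{\cl(v)}$.

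For the equality characterization, any extremal $G$ must saturate both steps. Corollary \ref{coro:spectral-vertex-degree-localized} forces $G$, up to isolated vertices, to be either a complete bipartite graph (when $\omega = 2$) or a regular complete $\omega$-partite graph (for $\omega \geq 3$); Theorem \ref{thm:local-Turan-degree} independently forces $G$, up to isolated vertices, to be a regular complete multi-partite graph. The intersection is exactly the regular complete multi-partite graphs: in the bipartite case $K_{a,b}$, equality in $\sum_v d(v) x_v \leq (\sum_v x_v)^2$ with $x_v = \tfrac{1}{2}$ reduces to $ab = ((a+b)/2)^2$, forcing $a=b$. Conversely, for $K_{t,\ldots,t}$ with $r$ parts of size $t$, a direct computation gives $\lambda(G) = (r-1)t = \sum_v \frac{\cl(v)-1}{\cl(v)}$, confirming that these graphs do achieve the combined bound.

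The substantive work is really absorbed into the two ingredients---the spectral vertex-degree bound and the Tur\'an-type inequality of Theorem \ref{thm:local-Turan-degree}---so once those are in hand, the deduction here is essentially routine. The only mildly delicate point is the equality analysis, where one must cross-check that the unbalanced complete bipartite branch permitted by Corollary \ref{coro:spectral-vertex-degree-localized} gets pruned by Theorem \ref{thm:local-Turan-degree}, leaving precisely the regular complete multi-partite graphs as the joint extremal family.
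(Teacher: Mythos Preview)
Your proof is correct and follows the same route as the paper: apply Corollary~\ref{coro:spectral-vertex-degree-localized} to bound $\lambda(G)^2$ by $\sum_v d(v)\frac{\cl(v)-1}{\cl(v)}$, then apply Theorem~\ref{thm:local-Turan-degree} and take square roots. Your equality analysis is more explicit than the paper's terse ``combining gives the desired result,'' and your observation that the unbalanced $K_{a,b}$ branch permitted by Corollary~\ref{coro:spectral-vertex-degree-localized} is pruned by Theorem~\ref{thm:local-Turan-degree} is exactly the right point.
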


As a direct consequence of Corollary \ref{coro:spectral-localized-Turan-vertex}, we recover 
another theorem of Wilf \cite{Wilf1986}, which states that $\lambda(G)\leq (1 - 1/\omega(G))n$ for any graph $G$.

\begin{corollary}[Weighted Edwards-Elphick Theorem]\label{coro:weighted-Edwards-Elphick}
Let $G$ be a weighted graph with spectral radius $\lambda(G)$. Then
\begin{equation}\label{eq:local-bound}
\chi(G)\geq 1+\frac{\lambda(G)^2}{\|A(G)\|_F^2-\lambda(G)^2}.
\end{equation}    
\end{corollary}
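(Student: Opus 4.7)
The plan is to reduce the statement to a cleaner equivalent form and then apply Theorem \ref{thm:main} with a single monotonicity estimate. Rearranging $\chi \geq 1 + \lambda^2/(\|A\|_F^2 - \lambda^2)$ (assuming $\|A(G)\|_F^2 > \lambda(G)^2$, else the bound is vacuous since $\chi(G) \geq 1$) is algebraically equivalent to
\[
\lambda(G)^2 \leq \Big(1 - \frac{1}{\chi(G)}\Big) \|A(G)\|_F^2,
\]
so this is what I would actually prove.

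The key observation is that for a weighted graph $G$, the Frobenius norm satisfies $\|A(G)\|_F^2 = 2\sum_{e\in E(G)} w(e)^2$, while Theorem \ref{thm:main} gives
\[
\lambda(G)^2 \leq 2 \sum_{e\in E(G)} \frac{\cl(e)-1}{\cl(e)}\, w(e)^2.
\]
For every edge $e$, the clique $K_{\cl(e)}$ containing $e$ is a subgraph of $G$, hence its chromatic number $\cl(e)$ satisfies $\cl(e) \leq \omega(G) \leq \chi(G)$. Since the function $t \mapsto (t-1)/t$ is monotonically increasing on $t \geq 1$, this yields $\frac{\cl(e)-1}{\cl(e)} \leq \frac{\chi(G)-1}{\chi(G)}$ for each edge $e$.

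Pulling the uniform bound out of the sum then gives
\[
\lambda(G)^2 \leq 2 \cdot \frac{\chi(G)-1}{\chi(G)} \sum_{e\in E(G)} w(e)^2 = \frac{\chi(G)-1}{\chi(G)} \|A(G)\|_F^2,
\]
which is exactly the reformulated inequality, and rearranging returns the desired form. There is no real obstacle here: the proof is essentially a one-line consequence of Theorem \ref{thm:main} combined with the edge-clique versus chromatic-number comparison. The only point requiring a brief mention is that for weighted graphs, $\chi(G)$ and $\cl(e)$ are understood with respect to the underlying simple graph (the support of $w$), so that $\cl(e) \leq \chi(G)$ still makes sense; and that the case $\|A(G)\|_F^2 = \lambda(G)^2$ (which forces $G$ to be, up to isolated vertices, a union of weighted $K_2$'s, i.e.\ $\chi(G) = 2$) has to be either checked directly or absorbed into the rearrangement by working with the equivalent multiplicative form $\chi(G)\lambda(G)^2 \leq (\chi(G)-1)\|A(G)\|_F^2$ throughout.
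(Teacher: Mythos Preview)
Your proposal is correct and follows essentially the same route as the paper: the paper's proof is the single sentence ``Since $\chi(G)\geq \omega(G)$ holds for any graph, Corollary~\ref{coro:weighted-Edwards-Elphick} follows directly from Theorem~\ref{thm:main},'' which unpacks to exactly your argument of bounding each $\frac{\cl(e)-1}{\cl(e)}$ by $\frac{\chi(G)-1}{\chi(G)}$ via $\cl(e)\leq\omega(G)\leq\chi(G)$ and then rearranging. Your added remarks on the degenerate case $\|A(G)\|_F^2=\lambda(G)^2$ and on interpreting $\chi$ and $\cl$ for the underlying simple graph are fine points the paper leaves implicit.
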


The above result provides a weighted generalization of the Edwards-Elphick Theorem \cite{EE1983}.

\begin{corollary}[Weighted Cvetkovi\'c Theorem]\label{coro:weighted-Cvetkovic}
Let $G$ be a weighted graph with an edge weight $w(e)$ for each edge $e$ and spectral radius $\lambda(G)$. 
Suppose that $2\sum_{e\in E(G)} w(e)\geq \|A(G)\|_F^2$. Then
\begin{equation}\label{eq:local-bound}
\chi(G)\geq 1+\frac{\lambda(G)}{n-\lambda(G)}.
\end{equation}    
\end{corollary}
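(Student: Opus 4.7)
The plan is to combine Theorem \ref{thm:main} with a Rayleigh-quotient estimate at the all-ones vector, in parallel with the proof of Corollary \ref{coro:weighted-Edwards-Elphick}. The extra hypothesis $2\sum_{e\in E(G)}w(e)\geq \|A(G)\|_F^2$ is precisely what is needed to convert the Frobenius-norm-based (Edwards--Elphick type) upper bound into the $n$-based (Cvetkovi\'c type) upper bound on $\lambda(G)$.

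First, since $\cl(e)\leq \omega(G)\leq \chi(G)$ for every edge $e$, Theorem \ref{thm:main} together with $\|A(G)\|_F^2 = 2\sum_{e\in E(G)}w(e)^2$ gives
\[
\lambda(G)^2 \;\leq\; 2\sum_{e\in E(G)}\frac{\cl(e)-1}{\cl(e)}\,w(e)^2 \;\leq\; \left(1-\frac{1}{\chi(G)}\right)\|A(G)\|_F^2.
\]
Next, the hypothesis yields $\|A(G)\|_F^2\leq 2\sum_{e\in E(G)} w(e)\geq 0$, so $\sum_{e\in E(G)} w(e)\geq 0$. Applying the Rayleigh inequality to the all-ones vector $\bm{1}\in\mathbb{R}^n$ therefore produces
\[
\lambda(G)\;\geq\; \frac{\bm{1}^{\mathrm{T}} A(G)\bm{1}}{\bm{1}^{\mathrm{T}}\bm{1}} \;=\; \frac{2\sum_{e\in E(G)} w(e)}{n}\;\geq\; 0,
\]
where I use that the hypothesis forces $\lambda_{\max}(A(G))\geq 0$, so the spectral radius coincides with (or exceeds) the largest eigenvalue for this direction of estimate.

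Chaining these two facts with the hypothesis $\|A(G)\|_F^2\leq 2\sum_{e} w(e)$ gives $\|A(G)\|_F^2\leq n\lambda(G)$, and substituting into the first display produces
\[
\lambda(G)^2 \;\leq\; \left(1-\frac{1}{\chi(G)}\right) n\,\lambda(G).
\]
Dividing by $\lambda(G)$ (the degenerate case $\lambda(G)=0$ makes the conclusion trivial) yields $\lambda(G)\leq n\bigl(1-1/\chi(G)\bigr)$, which rearranges to $\chi(G)\geq n/(n-\lambda(G)) = 1+\lambda(G)/(n-\lambda(G))$.

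There is no serious technical obstacle in this argument, since every step is either a direct invocation of Theorem \ref{thm:main}, a Rayleigh bound, or an algebraic rearrangement. The only conceptual point worth flagging is the role of the weight hypothesis: it is exactly the ``conversion factor'' that replaces $\|A(G)\|_F^2$ by $n\lambda(G)$ at the decisive step, turning the weighted Edwards--Elphick inequality into the weighted Cvetkovi\'c inequality.
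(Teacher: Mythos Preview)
Your proof is correct and follows essentially the same route as the paper: the paper first invokes Corollary~\ref{coro:weighted-Edwards-Elphick} (which is exactly your first display, rearranged) and then uses the hypothesis together with the Rayleigh bound $2\sum_e w(e)\leq n\lambda(G)$ to pass from $\|A(G)\|_F^2$ to $n\lambda(G)$, finishing with the same algebraic rearrangement. The only difference is that you re-derive the Edwards--Elphick step directly from Theorem~\ref{thm:main} rather than citing the corollary.
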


This generalizes a theorem of Cvetkovi\'c \cite{Cvetkovic1972}, which corresponds to 
the case where all edge weights are $1$.

\section{A new localized spectral Tur\'an theorem for weighted graphs}
\label{sec:proof-main}

The goal of this section is to present a proof of Theorem \ref{thm:main}.
Before the proof, we need some additional notation. Let $\triangle^{n-1}$ be the standard simplex:
\[
\triangle^{n-1}: = \bigg\{(x_1,x_2,\ldots,x_n) \in\mathbb{R}^n : x_i \geq 0, i\in [n] \ \text{and}\ \sum_{i=1}^n x_i = 1\bigg\}.
\]
Let $G$ be a graph on $n$ vertices with clique number $\omega (G)$. The well-known Motzkin--Straus
Theorem \cite{Motzkin-Straus1965} states that
\begin{equation}\label{eq:Motzkin-Straus-inequality}
\max\{\bm{z}^{\mathrm{T}} A(G) \bm{z}: \bm{z}\in\triangle^{n-1}\} = 1 - \frac{1}{\omega (G)}.
\end{equation}
The above equation established a remarkable connection between the clique number and the Lagrangian of a graph.

Additionally, Motzkin and Straus also gave a characterization on equality $\bm{z}^{\mathrm{T}} A(G) \bm{z} = 1-1/\omega(G)$. 
Given a vector $\bm{x}$, the \emph{support} of $\bm{x}$, denoted by $\supp (\bm{x})$, is the set consisting of all indices
corresponding to nonzero entries in $\bm{x}$. 

\begin{proposition}[\cite{Motzkin-Straus1965}]\label{prop:MS-equality}
Let $G$ be an $n$-vertex graph with $\omega(G)=\omega$, and let $\bm{x}\in\triangle^{n-1}$.
Then $\bm{x}^{\mathrm{T}} A(G) \bm{x} = 1-1/\omega$ holds if and only if the induced subgraph of $G$ on
$\supp (\bm{x})$ is a complete $\omega$-partite graph whose vertex classes $V_1,V_2,\ldots,V_{\omega}$ 
satisfy $\sum_{v\in V_i} x_v = 1/\omega$ for all $i\in [\omega]$.
\end{proposition}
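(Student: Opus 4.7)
The strategy is to prove the two implications separately, with sufficiency by direct computation and necessity by combining first-order stationarity on $\triangle^{n-1}$ with a carefully chosen second-order perturbation across triples of support vertices. For sufficiency, writing $s_i := \sum_{v \in V_i} x_v$, the hypothesis gives $s_i = 1/\omega$, so
\[
\bm{x}^{\mathrm{T}} A(G) \bm{x} = 2 \sum_{1 \leq i < j \leq \omega} s_i s_j = 1 - \sum_{i=1}^{\omega} s_i^2 = 1 - 1/\omega.
\]

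For the necessity direction, the first step is to extract information from the maximality of $\bm{x}$. For any $i,j \in \supp(\bm{x})$, the one-parameter family $\bm{x} + t(\bm{e}_i - \bm{e}_j)$ (with $\bm{e}_i$ the standard basis vector) remains in $\triangle^{n-1}$ for small $|t|$, so the first-order term $2t\bigl((A\bm{x})_i - (A\bm{x})_j\bigr)$ of the objective must vanish. Hence $(A\bm{x})_i$ equals a common constant $c$ on $\supp(\bm{x})$, and summing the identity $x_i (A\bm{x})_i = c\, x_i$ over $i$ identifies $c = \bm{x}^{\mathrm{T}} A(G)\bm{x} = 1 - 1/\omega$.

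The second step is to show that $G[\supp(\bm{x})]$ is complete multipartite, equivalently, that it contains no induced $K_2 + K_1$. Suppose for contradiction that $i, j, k \in \supp(\bm{x})$ satisfy $ij \in E(G)$ and $ik, jk \notin E(G)$. Apply the perturbation $\bm{x}(t) = \bm{x} + t(-\bm{e}_i - \bm{e}_j + 2\bm{e}_k)$, which preserves both the simplex constraint and nonnegativity for small $|t|$. By the previous paragraph the first-order change is $2tc(-1 - 1 + 2) = 0$, whereas the second-order term equals
\[
t^2 (-\bm{e}_i - \bm{e}_j + 2\bm{e}_k)^{\mathrm{T}} A(G) (-\bm{e}_i - \bm{e}_j + 2\bm{e}_k) = 2t^2 > 0,
\]
since only the entry $A_{ij} = 1$ contributes. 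This strictly increases the objective, contradicting the maximality of $\bm{x}$.

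Finally, let $V_1, \ldots, V_k$ be the parts of the resulting complete multipartite structure on $\supp(\bm{x})$ and set $s_i := \sum_{v \in V_i} x_v$. Then $\bm{x}^{\mathrm{T}} A(G) \bm{x} = 1 - \sum_{i=1}^k s_i^2 \leq 1 - 1/k$ by Cauchy--Schwarz, and selecting one vertex per part yields a $K_k$ in $G$, forcing $k \leq \omega$. The assumed equality $1 - 1/\omega$ therefore forces both $k = \omega$ and $s_i = 1/\omega$ for all $i$. The main obstacle I anticipate is pinpointing the correct second-order perturbation: the standard swap $\bm{e}_i - \bm{e}_j$ recovers only the gradient equality, and it is the asymmetric triple $(-\bm{e}_i - \bm{e}_j + 2\bm{e}_k)$\,--\,tuned so that the first-order term vanishes on the constant-gradient support while the quadratic form isolates the single edge $A_{ij}$\,--\,that delivers the strictly positive second-order increment and hence the contradiction.
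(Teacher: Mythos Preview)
The paper does not supply its own proof of this proposition; it is quoted verbatim from Motzkin--Straus and used as a black box. So there is no ``paper's proof'' to compare against here.

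That said, your argument is correct and self-contained. The sufficiency computation is standard. For necessity, your two-stage variational approach is clean: the swap $\bm{e}_i-\bm{e}_j$ gives the constant-gradient condition on $\supp(\bm{x})$, and then the three-vertex perturbation $-\bm{e}_i-\bm{e}_j+2\bm{e}_k$ is exactly the right device, since it kills the first-order term via the gradient constancy while the quadratic form $\bm{v}^{\mathrm{T}}A\bm{v}$ picks up only the single edge $ij$, yielding the strict increase $2t^{2}>0$. This rules out any induced $K_2+K_1$ on the support, and hence (by the standard $\overline{P_3}$-free characterisation) forces $G[\supp(\bm{x})]$ to be complete multipartite; the Cauchy--Schwarz step then pins down $k=\omega$ and the equal part weights. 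The original Motzkin--Straus paper and most textbook treatments proceed instead by an inductive vertex-deletion argument (merging two non-adjacent support vertices and reducing to a smaller graph); your purely variational route avoids induction entirely and makes the role of the second-order term explicit, which is arguably more transparent for the equality characterisation.
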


We will use the following lemma. Its proof follows an argument similar to one in \cite{LiuN26}.

\begin{lemma}\label{lem:pre-for-weighted-MS}
Let $G$ be an $n$-vertex graph, and $W\in\mathbb{R}^{n\times n}$ be a nonnegative symmetric matrix. 
If $\bm{x}\in\triangle^{n-1}$ is a vector that attains $\max\{\bm{z}^{\mathrm{T}} (W\circ A(G)) \bm{z}: \bm{z}\in \triangle^{n-1}\}$, 
then 
\begin{enumerate}
\item[$(1)$] $\bm{x}^{\mathrm{T}} (W\circ A(G))\bm{e}_i = \bm{x}^{\mathrm{T}} (W\circ A(G))\bm{e}_j$ 
for any $i,j\in\supp (\bm{x})$. Here $\bm{e}_i$ is the $i$-th column of the identity matrix of order $n$.

\item[$(2)$] If furthermore, $\bm{x}$ has minimal support set, then $\supp (\bm{x})$ induces a clique in $G$.
\end{enumerate}
\end{lemma}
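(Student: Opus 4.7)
The plan is to treat both parts as standard first- and second-order optimality arguments on the simplex, applied to the symmetric matrix $M := W \circ A(G)$, which inherits a zero diagonal from $A(G)$ since $(W\circ A(G))_{kk}=W_{kk}\cdot 0=0$.

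For part $(1)$, I would fix any two indices $i,j\in\supp(\bm{x})$ (so that $x_i,x_j>0$) and consider the one-parameter family
\[
\bm{x}(t) := \bm{x} + t(\bm{e}_i - \bm{e}_j).
\]
Because $\bm{e}_i-\bm{e}_j$ is orthogonal to $\bm{1}$, the sum constraint $\sum_k x_k(t)=1$ is preserved, and nonnegativity holds for all $|t|$ sufficiently small. Expanding via the symmetry of $M$ gives
\[
f(t) := \bm{x}(t)^{\mathrm{T}} M\bm{x}(t) = \bm{x}^{\mathrm{T}} M\bm{x} + 2t\,\bm{x}^{\mathrm{T}} M(\bm{e}_i-\bm{e}_j) + t^2 (\bm{e}_i-\bm{e}_j)^{\mathrm{T}} M(\bm{e}_i-\bm{e}_j),
\]
and maximality of $\bm{x}$ forces $f'(0)=0$, which is precisely $\bm{x}^{\mathrm{T}} M\bm{e}_i = \bm{x}^{\mathrm{T}} M\bm{e}_j$.

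For part $(2)$, I would argue by contradiction. Assume that some distinct $i,j\in\supp(\bm{x})$ satisfy $ij\notin E(G)$, so $M_{ij}=0$. The coefficient of $t^2$ in $f(t)$ above equals
\[
(\bm{e}_i-\bm{e}_j)^{\mathrm{T}} M(\bm{e}_i-\bm{e}_j) = M_{ii} + M_{jj} - 2M_{ij} = 0,
\]
using the zero diagonal of $M$ and the assumption $M_{ij}=0$. Combined with the first-order vanishing from part $(1)$, the quadratic form $f$ is identically constant along the whole line $t\mapsto\bm{x}(t)$. Choosing $t=x_j$ then yields another maximizer $\bm{x}':=\bm{x}(x_j)\in\triangle^{n-1}$ with $x'_i=x_i+x_j$, $x'_j=0$, and $\supp(\bm{x}')=\supp(\bm{x})\setminus\{j\}$, contradicting the minimality of $\supp(\bm{x})$. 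Hence every pair in $\supp(\bm{x})$ is adjacent in $G$, i.e., $\supp(\bm{x})$ induces a clique.

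The argument is mostly routine convex/quadratic optimization, and I do not anticipate a substantial obstacle; the only thing to verify carefully is feasibility of $\bm{x}'$ in part $(2)$, which is immediate because $x_i+x_j\le\sum_k x_k=1$ and all other coordinates are unchanged, so the reduced vector still lies in $\triangle^{n-1}$ and genuinely has strictly smaller support.
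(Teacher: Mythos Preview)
Your proposal is correct and follows essentially the same approach as the paper: both arguments perturb along the direction $\bm{e}_i-\bm{e}_j$ and exploit that $W\circ A(G)$ has zero diagonal to analyze the resulting quadratic in $t$. The only cosmetic difference is that in part~(2) you invoke part~(1) to get $f'(0)=0$ and hence constancy of $f$, whereas the paper argues part~(2) independently by assuming without loss of generality that $\bm{x}^{\mathrm{T}} M\bm{e}_i\ge\bm{x}^{\mathrm{T}} M\bm{e}_j$ and showing $F_G(\bm{y})\ge F_G(\bm{x})$ directly; both routes are equivalent here.
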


\begin{proof}
For brevity, we denote $A:= A(G)$ and $F_G(\bm{z}) := \bm{z}^{\mathrm{T}} \big(W\circ A\big) \bm{z}$ 
for $\bm{z}\in\mathbb{R}^n$. Assume that $\bm{x}\in\triangle^{n-1}$ is a vector that attains the maximum of
$F_G(\,\cdot\,)$ over $\triangle^{n-1}$.

(1) Let $i,j\in\supp (\bm{x})$. We assume, without loss of generality, towards contradiction that
$\bm{x}^{\mathrm{T}} W\bm{e}_i > \bm{x}^{\mathrm{T}} (W\circ A)\bm{e}_j$.
Let $0<\varepsilon<\min\{x_i, x_j\}$. Denote $\bm{y} := \bm{x} + \varepsilon (\bm{e}_i - \bm{e}_j)$.
As $W\circ A$ has zero diagonal, $\bm{e}_i^{\mathrm{T}} (W\circ A) \bm{e}_i=0$. Then
\begin{align*}
F_G(\bm{y}) - F_G(\bm{x})
& = 2\varepsilon \bm{x}^{\mathrm{T}} (W\circ A)(\bm{e}_i - \bm{e}_j) 
+ \varepsilon^2 (\bm{e}_i-\bm{e}_j)^{\mathrm{T}} (W\circ A) (\bm{e}_i - \bm{e}_j) \\
& = 2\varepsilon \bm{x}^{\mathrm{T}} (W\circ A)(\bm{e}_i - \bm{e}_j) - 2\varepsilon^2 \bm{e}_i^{\mathrm{T}} (W\circ A) \bm{e}_j.
\end{align*}
Choosing $\varepsilon$ to be sufficiently small gives $F_G(\bm{y}) > F_G(\bm{x})$,
a contradiction.

(2) Let $\bm{x}$ have minimal support set. Assume for contradiction that there exist 
distinct $i,j\in \supp(\bm{x})$ such that $ij\notin E(G)$. Without loss of generality, 
assume $\bm{x}^{\mathrm{T}} (W\circ A) \bm{e}_i\geq \bm{x}^{\mathrm{T}} (W\circ A) \bm{e}_j$.

Consider the vector $\bm{y} = \bm{x} + x_j(\bm{e}_i - \bm{e}_j)$, and set
$\bm{e}:= \bm{e}_i - \bm{e}_j$. A direct calculation shows that $\bm{y}\in\triangle^{n-1}$, and
\begin{align*}
F_G(\bm{y}) - F_G(\bm{x}) 
& = \bm{y}^{\mathrm{T}} (W\circ A) \bm{y} - \bm{x}^{\mathrm{T}} (W\circ A) \bm{x} \\
& = 2x_j \bm{x}^{\mathrm{T}} (W\circ A) \bm{e} + x_j^2 \bm{e}^{\mathrm{T}} (W\circ A) \bm{e}.
\end{align*}
Since $ij\notin E(G)$, we have $\bm{e}^{\mathrm{T}} (W\circ A) \bm{e} = 0$. Consequently,
\[
F_G(\bm{y}) - F_G(\bm{x}) = 2x_j\bm{x}^{\mathrm{T}} (W\circ A) \bm{e} = 2x_j \bm{x}^{\mathrm{T}} (W\circ A) (\bm{e}_i - \bm{e}_j) \geq 0.
\]
This implies that $\bm{y}$ also attains the maximum of $F_G(\,\cdot\,)$ over $\triangle^{n-1}$.
However, $\supp (\bm{y}) < \supp (\bm{x})$, which contradicts the minimality of the support of $\bm{x}$.
So we finish the proof of item (2).
\end{proof}

For a graph $G$ on $n$ vertices and no isolated vertices, we define the $n\times n$ symmetric matrix 
$W(G) = [w_{ij}]$ by 
\[
w_{ij} = \frac{1}{2} \bigg(\frac{\cl(i)}{\cl(i)-1} + \frac{\cl(j)}{\cl(j)-1}\bigg).
\]

\begin{lemma}\label{lem:generalized-MS-inequality}
Let $G$ be an $n$-vertex graph with $\omega(G)=\omega$ and no isolated vertices, and 
let $\bm{x}\in\triangle^{n-1}$. Then
\[
\bm{x}^{\mathrm{T}} \big(W(G)\circ A(G)\big) \bm{x} \leq 1.
\]
Equality holds if and only if the induced subgraph of $G$ on $\supp (\bm{x})$ is a complete $\omega$-partite graph 
whose vertex classes $V_1,V_2,\ldots,V_{\omega}$ satisfy $\sum_{v\in V_i} x_v = 1/\omega$ for all $i\in [\omega]$.
\end{lemma}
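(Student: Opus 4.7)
My plan is to regard the inequality as a weighted Motzkin--Straus problem and analyze the maximum of
\[
F(\bm{z}) := \bm{z}^{\mathrm{T}}\bigl(W(G) \circ A(G)\bigr)\bm{z}
\]
over the compact simplex $\triangle^{n-1}$. Since $W(G)$ is nonnegative and symmetric, Lemma \ref{lem:pre-for-weighted-MS} applies with $W = W(G)$. I would pick a maximizer $\bm{x}^{*}$ of $F$ with minimum-cardinality support; part (2) of the lemma then forces $\supp(\bm{x}^{*})$ to induce a clique $K_k$ in $G$ for some $k \leq \omega$. Because every $v \in \supp(\bm{x}^{*})$ sits inside this $K_k$, we have $\cl_G(v) \geq k$, hence
\[
c_v := \frac{\cl_G(v)}{\cl_G(v)-1} \leq \frac{k}{k-1} \qquad\text{for all } v \in \supp(\bm{x}^{*}).
\]

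The upper bound then drops out of two chained estimates. Expanding $F(\bm{x}^{*}) = \sum_{ij \in E(K_k)}(c_i + c_j)\,x^{*}_i x^{*}_j$, using $c_i + c_j \leq 2k/(k-1)$, and invoking the Cauchy--Schwarz bound $\|\bm{x}^{*}\|^{2} \geq 1/k$ on the $k$-point simplex gives
\[
F(\bm{x}^{*}) \leq \frac{2k}{k-1}\sum_{ij \in E(K_k)} x^{*}_i x^{*}_j = \frac{k}{k-1}\bigl(1 - \|\bm{x}^{*}\|^{2}\bigr) \leq \frac{k}{k-1}\cdot\frac{k-1}{k} = 1,
\]
where the middle equality uses $2\sum_{ij \in E(K_k)} x^{*}_i x^{*}_j = 1 - \|\bm{x}^{*}\|^{2}$. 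Since $\bm{x}^{*}$ is a global maximizer, $F(\bm{x}) \leq 1$ for every $\bm{x} \in \triangle^{n-1}$.

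The sufficiency in the equality characterization is immediate: if $\supp(\bm{x})$ induces a complete $\omega$-partite graph $H$ with balanced classes $V_1,\ldots,V_{\omega}$ of total $\bm{x}$-mass $1/\omega$, then every vertex of $H$ lies in a copy of $K_{\omega}$ (take one representative per class), so combined with $\omega(G)=\omega$ we get $\cl_G(v) = \omega$ and $c_v = \omega/(\omega-1)$ throughout $\supp(\bm{x})$; applying Proposition \ref{prop:MS-equality} to $H$ then yields $F(\bm{x}) = \tfrac{\omega}{\omega-1}\,\bm{x}^{\mathrm{T}} A(H)\bm{x} = 1$. For the necessity, I would start from the first-order optimality condition provided by Lemma \ref{lem:pre-for-weighted-MS}(1), namely $\bigl((W \circ A)\bm{x}\bigr)_i = F(\bm{x}) = 1$ for every $i \in \supp(\bm{x})$, and then iterate the mass-transfer swap from the proof of its part (2) on any pair $i,j \in \supp(\bm{x})$ with $ij \notin E(G)$. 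Each such swap preserves $F = 1$ while strictly reducing the support, so the procedure terminates at a minimal-support maximizer whose structure is pinned down by the equality analysis of the upper bound.

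The main obstacle will be organizing this reduction into the partition promised by the statement: one must show that the ``fibers'' of the collapse (vertices of $\supp(\bm{x})$ that are eventually merged into the same endpoint) form independent sets in $G$, that any two vertices from different fibers remain adjacent in $G$, and that each fiber has total $\bm{x}$-mass $1/\omega$. Here the first-order identity $\bigl((W\circ A)\bm{x}\bigr)_i = 1$ together with the equality cases of the two inequalities $c_i + c_j \leq 2k/(k-1)$ and $\|\bm{x}^{*}\|^{2} \geq 1/k$ should jointly force $\cl_G \equiv \omega$ on $\supp(\bm{x})$ and the required balance, recovering the complete $\omega$-partite structure.
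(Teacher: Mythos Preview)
Your argument for the inequality and for the sufficiency half of the equality case is correct and matches the paper's proof essentially line by line. The gap is in necessity. Collapsing $\bm{x}$ by iterated mass-transfer inside the fixed graph $G$ does land you on a minimal-support maximizer $\bm{x}^{*}$ supported on some clique $K_k$, and tracing equality through your chain pins down $\bm{x}^{*}$ completely: uniform weights $1/k$ and $\cl_G(v)=k$ on $\supp(\bm{x}^{*})$. But this says nothing about the vertices you discarded along the way. The first-order identity $\bigl((W\circ A)\bm{x}\bigr)_i=1$ does not by itself force two non-adjacent $i,j\in\supp(\bm{x})$ to share the same neighborhood in $\supp(\bm{x})$, so your ``fibers'' are a priori order-dependent and you have no mechanism to certify that they are independent sets pairwise completely joined, nor that $k=\omega$. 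You correctly flag this as ``the main obstacle,'' but the assertion that the listed equality cases ``should jointly force'' the structure is not an argument.

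The paper closes this gap by induction on $n$, and the key difference from your plan is that it deletes $v_1$ from the \emph{graph} (passing to $G'=G-v_1$) rather than merely zeroing its weight. In $G'$ each $\cl$-value can only drop, so each entry of $W(G')$ dominates the corresponding entry of $W(G)$; combined with the first-order identity \eqref{eq:auxiliary-eq-4} this yields $F_{G'}(\bm{x}')\geq F_G(\bm{x})=1$, hence $F_{G'}(\bm{x}')=1$ by the already-proven upper bound applied to $G'$. The induction hypothesis then returns the full partition $V_1,\ldots,V_k$ of $\supp(\bm{x}')$, and a two-case analysis (does $v_1$ meet every $V_i$, or does it miss some class?) places $v_1$ either as a new singleton class or into the class it misses. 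Your fiber picture is the right intuition, but it is this inductive passage to a smaller graph---not any consequence of the first-order condition in $G$ alone---that makes the structural claims provable.
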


\begin{proof}
For brevity, we denote $W:= W(G)$, $A:= A(G)$ and $F_G(\bm{z}) := \bm{z}^{\mathrm{T}} (W\circ A) \bm{z}$. 
We first assume $\bm{z}\in\triangle^{n-1}$ is a vector that attains the maximum of
$F_G(\,\cdot\,)$ over $\triangle^{n-1}$, and has minimal support set.

By Lemma \ref{lem:pre-for-weighted-MS}, $\supp (\bm{z})$ forms a clique $K$ in $G$. Hence,
\[
F_G(\bm{z}) = \bm{z}^{\mathrm{T}} (W\circ A) \bm{z} = \sum_{ij\in E(K)} 
\bigg(\frac{\cl(i)}{\cl(i)-1} + \frac{\cl(j)}{\cl(j)-1}\bigg) z_iz_j.
\]
Observing that $x/(x-1)$ is a decreasing function in $x$, we obtain
\[
F_G(\bm{z}) \leq \frac{2|K|}{|K| - 1} \sum_{ij\in E(K)} z_iz_j \leq 1,
\]
where the last inequality follows by applying \eqref{eq:Motzkin-Straus-inequality} to the clique $K$.

In the following we characterize the equality for $F_G(\bm{x}) = 1$. If $G[\supp (\bm{x})]$ is a 
complete $\omega$-partite graph whose vertex classes $V_1,V_2,\ldots,V_{\omega}$ satisfy 
$\sum_{v\in V_i} x_v = 1/\omega$ for all $i\in [\omega]$, then $\bm{x}^{\mathrm{T}} A(G)\bm{x} = 1-1/\omega$
by Proposition \ref{prop:MS-equality}. Since $\cl(u) \leq\omega$ for each $u\in V(G)$, we have
\begin{align*}
F_G(\bm{x}) & = \sum_{ij\in E(G)} 
\bigg(\frac{\cl(i)}{\cl(i)-1} + \frac{\cl(j)}{\cl(j)-1}\bigg) x_ix_j \\
& \geq \frac{2\omega}{\omega-1} \sum_{ij\in E(G)} x_ix_j \\
& = \frac{\omega}{\omega-1} \cdot \bm{x}^{\mathrm{T}} A(G)\bm{x} = 1,
\end{align*}
which yields that $F_G(\bm{x})=1$.

For the other direction, assume that $F_G(\bm{x})=1$. We proceed by induction on $n$ to demonstrate that
$G[\supp (\bm{x})]$ is a complete $\omega$-partite graph with vertex classes $V_1,V_2,\ldots,V_{\omega}$, 
and $\sum_{v\in V_i} x_v = 1/\omega$ for all $i\in [\omega]$. The base case $n=2$ is clear. Now assume $n\geq 3$. 
If $G[\supp(\bm{x})]$ is a complete graph, the result holds trivially. Otherwise, there exist 
$v_1,v_2\in\supp(\bm{x})$ such that $v_1v_2\notin E(G)$. Let $G':= G-v_1$, and define a vector 
$\bm{x}'\in\mathbb{R}^{n-1}$ for $G'$ by
\[
x'_v =
\begin{cases}
x_v, & v\neq v_2, \\
x_{v_1} + x_{v_2}, & v = v_2.
\end{cases}
\]
From Lemma \ref{lem:pre-for-weighted-MS}, we have 
$\bm{x}^{\mathrm{T}} (W\circ A)\bm{e}_{v_1} = \bm{x}^{\mathrm{T}} (W\circ A)\bm{e}_{v_2}$, which implies
\begin{equation}\label{eq:auxiliary-eq-4}
\sum_{v\in N_G(v_1)} \bigg(\frac{\cl(v)}{\cl(v)-1} + \frac{\cl(v_1)}{\cl(v_1)-1}\bigg) x_v 
= \sum_{v\in N_G(v_2)} \bigg(\frac{\cl(v)}{\cl(v)-1} + \frac{\cl(v_2)}{\cl(v_2)-1}\bigg) x_v. 
\end{equation}
Since $v_1v_2\notin E(G)$, for each vertex $v\in V(G')$ adjacent to $v_2$, the value $\cl(v)$ 
remains the same in $G$ and $G'$. Combining this with \eqref{eq:auxiliary-eq-4} gives
\begin{align*}
& ~ \sum_{vv_2\in E(G')} \bigg(\frac{\cl_{G'}(v)}{\cl_{G'}(v)-1} + \frac{\cl_{G'}(v_2)}{\cl_{G'}(v_2)-1}\bigg) x'_vx'_{v_2} \\
= & ~ \sum_{vv_2\in E(G)} \bigg(\frac{\cl_G(v)}{\cl_G(v)-1} + \frac{\cl_G(v_2)}{\cl_G(v_2)-1}\bigg) (x_{v_1} + x_{v_2}) x_v \\
= & ~ \sum_{vv_2\in E(G)} \bigg(\frac{\cl(v)}{\cl(v)-1} {+} \frac{\cl(v_2)}{\cl(v_2)-1}\bigg) x_{v_2} x_v 
+ \sum_{vv_1\in E(G)} \bigg(\frac{\cl(v)}{\cl(v)-1} {+} \frac{\cl(v_1)}{\cl(v_1)-1}\bigg) x_{v_1} x_v.
\end{align*}
Consider any edge of $G'$ that is not incident to $v_2$. For such an edge $uv$, we have 
\[
\bigg(\frac{\cl_{G'}(u)}{\cl_{G'}(u)-1} + \frac{\cl_{G'}(v)}{\cl_{G'}(v)-1}\bigg) x'_ux'_v
\geq \bigg(\frac{\cl_G(u)}{\cl_G(u)-1} + \frac{\cl_G(v)}{\cl_G(v)-1}\bigg) x_ux_v.
\]
Hence, $F_{G'}(\bm{x}') = 1$. By the induction hypothesis, the induced subgraph $H$ of $G'$ on 
$\supp (\bm{x}')$ is a complete $k$-partite graph, where
$k=\omega(G')$. Let $V_1,\ldots,V_k$ be the vertex classes of $H$. 

{\noindent\bfseries Case 1.} $N_G(v_1)\cap V_i \neq\emptyset$ for all $i\leq [k]$. 

In this case, $\omega(G[\supp(\bm{x})]) = k+1$. If there exists some $j$ 
such that $|N_G(v_1)\cap V_j| < |V_j|$, then one can find a vertex $w\in G[\supp(\bm{x})]$ with 
$\cl(w) \leq k$ in $G[\supp(\bm{x})]$. This would imply $F_G(\bm{x}) > 1$, a contradiction. 
Hence, $v_1$ is adjacent to every vertex in $V_1\cup\cdots\cup V_k$. Hence, $G[\supp(\bm{x})]$ 
is a complete $(k+1)$-partite graph, as desired. 

{\noindent\bfseries Case 2.} There exists some $V_i$, say $V_1$, such that $v_1$ is nonadjacent 
to any vertices in $V_1$. 

Since $F_G(\bm{x}) = 1$ is maximal, Lemma \ref{lem:pre-for-weighted-MS} implies that for each $v\in V_1$,
we have $\bm{z}^{\mathrm{T}} (W\circ A)\bm{e}_{v_1} = \bm{z}^{\mathrm{T}} (W\circ A)\bm{e}_v$.
It follows that $v_1$ is adjacent to all vertices in $V_2\cup\cdots\cup V_k$. Furthermore, 
since $v_1v_2\notin E(G)$, it must be that $v_2\in V_1$. Thus, $V_1\cup \{v_1\}, V_2,\ldots,V_k$ 
form a complete $\omega (G)$-partite graph, and $\sum_{v\in V_1\cup\{v_1\}} x_v = \sum_{v\in V_i} x_v$ 
for $i=2,\ldots,\omega (G)$, completing the proof.
\end{proof}

For any edge $e=uv\in E(G)$, it is clearly $\cl(e)\leq\min\{\cl(u), \cl(v)\}$. Hence we immediately 
have the following result.

\begin{corollary}\label{coro:weighted-MS-inequality}
Let $G$ be an $n$-vertex graph with $\omega(G)=\omega$ and $\bm{x}\in\triangle^{n-1}$. Then
\[
2\sum_{ij\in E(G)} \frac{\cl(ij)}{\cl(ij)-1} x_ix_j \leq 1.
\]
Equality holds if and only if the induced subgraph of $G$ on $\supp (\bm{x})$ is a complete $\omega$-partite graph 
whose vertex classes $V_1,V_2,\ldots,V_{\omega}$ satisfy $\sum_{v\in V_i} x_v = 1/\omega$ for all $i\in [\omega]$.
\end{corollary}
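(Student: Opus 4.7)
I would prove Corollary \ref{coro:weighted-MS-inequality} by mirroring the proof of Lemma \ref{lem:generalized-MS-inequality}, replacing the vertex-based weight matrix $W(G)$ by the edge-based matrix $M$ defined by $M_{ij} = \frac{\cl(ij)}{\cl(ij)-1}$ for $ij\in E(G)$ and $M_{ij}=0$ otherwise. Writing $F_G(\bm{z}) = \bm{z}^{\mathrm{T}}(M\circ A(G))\bm{z} = 2\sum_{ij\in E(G)}\frac{\cl(ij)}{\cl(ij)-1} z_iz_j$, the target inequality is $F_G(\bm{z})\leq 1$ on $\triangle^{n-1}$.

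To prove the inequality, I would let $\bm{x}^{*}\in\triangle^{n-1}$ be a maximizer of $F_G$ with support of minimum cardinality. Since $M$ is nonnegative and symmetric with zero diagonal, Lemma \ref{lem:pre-for-weighted-MS}(2) applies (taking its $W$ to be $M$) and yields that $\supp(\bm{x}^{*})$ induces a clique $K$ in $G$. Every edge $ij \in E(K)$ is contained in $K$ itself, so $\cl(ij)\geq |K|$; since $x/(x-1)$ is decreasing on $(1,\infty)$, this forces $\frac{\cl(ij)}{\cl(ij)-1}\leq \frac{|K|}{|K|-1}$. Combining this uniform upper bound with Motzkin--Straus applied to the clique $K$ (using that $\bm{x}^{*}$ restricted to $K$ still sums to $1$) gives
\[
F_G(\bm{x}^{*}) \leq \frac{2|K|}{|K|-1}\sum_{ij\in E(K)} x^{*}_{i}x^{*}_{j} \leq \frac{2|K|}{|K|-1}\cdot\frac{|K|-1}{2|K|} = 1,
\]
and maximality extends the bound to all of $\triangle^{n-1}$.

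The forward direction of the equality characterization is immediate: if $G[\supp(\bm{x})]$ is complete $\omega$-partite with classes summing to $1/\omega$, then every edge of $G[\supp(\bm{x})]$ lies in some $K_\omega$ (obtained by taking one vertex per part), so $\cl(ij)=\omega$ on these edges, and Motzkin--Straus yields $F_G(\bm{x})=\tfrac{2\omega}{\omega-1}\cdot\tfrac{\omega-1}{2\omega}=1$. The main obstacle is the reverse direction, which I would attack by an induction on $n$ modelled on the argument in the proof of Lemma \ref{lem:generalized-MS-inequality}. Concretely, if $G[\supp(\bm{x})]$ is not already a clique, I would pick non-adjacent $v_1,v_2\in\supp(\bm{x})$, apply Lemma \ref{lem:pre-for-weighted-MS}(1) to extract the edge-weighted neighborhood balance identity at $v_1$ and $v_2$, transfer mass via $x'_{v_2} = x_{v_1}+x_{v_2}$ and $x'_{v_1}=0$, and exploit the edge-level analogue of the key observation used in Lemma \ref{lem:generalized-MS-inequality}---namely that $\cl_G(e)=\cl_{G-v_1}(e)$ on edges $e$ not involving $v_1$'s former clique structure---to conclude $F_{G-v_1}(\bm{x}')=1$. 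The induction hypothesis then produces a complete multipartite partition on $G-v_1$, and a case split on whether $v_1$ has a neighbor in every class of this partition reassembles the desired complete $\omega$-partite structure on $\supp(\bm{x})$.
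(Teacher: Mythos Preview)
Your approach is correct and, in substance, is what the paper's argument must amount to --- but you have actually written out what the paper glosses over. The paper's entire proof of Corollary~\ref{coro:weighted-MS-inequality} is the one sentence preceding it: ``For any edge $e=uv\in E(G)$, it is clearly $\cl(e)\leq\min\{\cl(u),\cl(v)\}$. Hence we immediately have the following result.'' Read literally as a deduction from Lemma~\ref{lem:generalized-MS-inequality}, this does not work: since $x/(x-1)$ is decreasing, $\cl(e)\leq\cl(u),\cl(v)$ gives $\tfrac{\cl(e)}{\cl(e)-1}\geq\tfrac{1}{2}\bigl(\tfrac{\cl(u)}{\cl(u)-1}+\tfrac{\cl(v)}{\cl(v)-1}\bigr)$, so the edge-based quadratic form \emph{dominates} the vertex-based one, and the bound $\leq 1$ of Lemma~\ref{lem:generalized-MS-inequality} does not transfer. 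What the authors presumably intend is that the \emph{proof} of Lemma~\ref{lem:generalized-MS-inequality} carries over verbatim with $\cl(ij)$ in place of $\tfrac12(\cl(i)+\cl(j))$-type weights --- which is exactly what you do.

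Your argument for the inequality is clean and correct. For the equality characterization, your outline is right but the phrasing is loose in places. In the induction step you need two facts stated precisely: (i) for every edge $vv_2\in E(G-v_1)$ one has $\cl_{G-v_1}(vv_2)=\cl_G(vv_2)$, since any clique through $v_2$ avoids the non-neighbour $v_1$; combined with the balance identity from Lemma~\ref{lem:pre-for-weighted-MS}(1) this makes the $v_2$-incident contribution in $F_{G-v_1}(\bm{x}')$ equal to the total $\{v_1,v_2\}$-incident contribution in $F_G(\bm{x})$; and (ii) for the remaining edges $\cl_{G-v_1}(e)\leq\cl_G(e)$, which only increases the weight. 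Your phrase ``edges not involving $v_1$'s former clique structure'' should be replaced by (i) and (ii) explicitly. The final case split mirrors Lemma~\ref{lem:generalized-MS-inequality}; just make sure that in Case~1 you identify an \emph{edge} (not a vertex) whose $\cl$-value is at most $k$ to force the strict inequality.
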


Now we are in standing for presenting the proof of our main theorem.

\begin{proof}[Proof of Theorem \ref{thm:main}] 
Let $A(G) = [w_{ij}]$ be the adjacency matrix of $G$, where $w_{ij} = w(e)$ if $e=ij\in E(G)$. 
Let $\bm{x}$ be a unit eigenvector of $A(G)$ corresponding to some eigenvalue of $A(G)$,
whose absolute value is equal to $\lambda (G)$, and define the vector $\bm{y}$ by $y_i = x_i^2$ 
for $i\in [n]$. Then we obtain
\begin{align*}
\lambda (G)
& = 2 \,\bigg|\sum_{ij\in E(G)} w_{ij} x_ix_j\bigg| \leq 2\cdot \sum_{ij\in E(G)} |w_{ij}| \sqrt{y_iy_j} \\
& = 2 \sum_{e\,=\, ij\in E(G)} |w_{ij}|\sqrt{\frac{\cl(e) - 1}{\cl(e)}} \cdot \sqrt{\frac{\cl(e)}{\cl(e) - 1} y_iy_j}.
\end{align*}
Using Cauchy--Schwarz inequality and Corollary \ref{coro:weighted-MS-inequality}, we find
\begin{align*}
\lambda (G)^2
& \leq 4 \Bigg(\sum_{e\in E(G)} \frac{\cl(e) - 1}{\cl(e)} w(e)^2\Bigg) 
\Bigg(\sum_{e\,=\, ij\in E(G)} \frac{\cl(e)}{\cl(e) - 1} y_iy_j\Bigg) \\
& \leq 2 \sum_{e\in E(G)} \frac{\cl(e) - 1}{\cl(e)} w(e)^2,
\end{align*}
completing the proof of \eqref{eq:weighted-local-bound}.

Next, we characterize all graphs attaining equality in \eqref{eq:weighted-local-bound}. 
From the proof above, equality in \eqref{eq:weighted-local-bound} holds if and only if 
the weight $w_{ij}$ has the same sign for each $ij\in E(G)$ for which $y_iy_j\neq 0$, and 
\[
\sum_{e\,=\, ij\in E(G)} \frac{\cl(e)}{\cl(e) - 1} y_iy_j 
= 1\quad \text{and}\quad \frac{\sqrt{y_iy_j}}{w_{ij}} \cdot\frac{\cl(e)}{\cl(e)-1}
\]
is a constant for each $e=ij\in E(G)$. By Corollary \ref{coro:weighted-MS-inequality} and $y_i > 0$ 
for any $i\in V(G)$ with $d(i) \geq 1$, the equality in \eqref{eq:weighted-local-bound} is 
equivalent to the following:

(i) $G$ is, up to isolated vertices, a complete $r$-partite graph whose vertex class $V_1,V_2,\ldots,V_r$ 
satisfy $\sum_{v\in V_i} y_v = 1/r$, $i\in [r]$;

(ii) For any $e=ij\in E(G)$, $w_{ij} = c\sqrt{y_iy_j}$ for some constant $c$.

We first show the necessity. Assume that equality holds in \eqref{eq:weighted-local-bound}. 
Our goal is to construct a vector $\bm{w}$ satisfying items (1) and (2).  

Without loss of generality, we assume $c>0$. Under this assumption, the adjacency matrix 
$A(G)$ is nonnegative, allowing us to take the eigenvector $\bm{x}$ to be nonnegative. 
Since $y_i = x_i^2$, it follows that $w_{ij} = cx_ix_j$ for each $ij\in E(G)$. We then 
determine the value of $c$ by equating two alternative expressions for $\lambda (G)$. 
From \eqref{eq:weighted-local-bound} we have
\begin{equation}\label{eq:equation-lambda}
\lambda (G) = \sqrt{2 \sum_{e\in E(G)} \frac{\cl(e) - 1}{\cl(e)} w(e)^2} 
= \sqrt{\frac{r-1}{r}} \cdot \|A(G)\|_F. 
\end{equation}
On the other hand, 
\[
\lambda (G) = 2\sum_{ij\in E(G)} w_{ij} x_ix_j = \frac{2}{c} \sum_{e\in E(G)} w(e)^2 
= \frac{\|A(G)\|_F^2}{c}.
\]
Combining the two equations for $\lambda (G)$, we deduce that
\begin{equation}\label{eq:c-value}
c = \sqrt{\frac{r}{r-1}}\cdot \|A(G)\|_F.
\end{equation}

Now define $\bm{w}:= \sqrt{c}\, (\bm{x}_{V_1}, \bm{x}_{V_2}, \ldots, \bm{x}_{V_r})^{\mathrm{T}}$. 
We will verify that this $\bm{w}$ satisfies items (1) and (2). 
For item (1), this follows from a direct verification, which we omit for brevity.
For item (2), we recall that $y_i = x_i^2$ for $i\in V(G)$ and $\sum_{v\in V_i} y_v = 1/r$ for each $i\in [r]$.
Consequently,
\begin{equation}\label{eq:sum-xu-square}
\|\bm{1}_{V_i}\circ \bm{x}\|^2 = \sum_{u\in V_i} x_u^2 = \frac{1}{r}.
\end{equation}
Thus, the left-hand side of the equation in item (2) is
\[
\|\bm{1}_{V_i} \circ \bm{w}\|^2 = c \|\bm{1}_{V_i} \circ \bm{x}\|^2
= \frac{c}{r} = \frac{\|A(G)\|_F}{\sqrt{r(r-1)}}.
\]
For the right-hand side, applying \eqref{eq:c-value} yields
\[
\|\bm{w}\|^2 - \sqrt{\frac{r-1}{r}} \cdot \|A(G)\|_F 
= c\cdot \|\bm{x}\|^2 - \sqrt{\frac{r-1}{r}} \cdot \|A(G)\|_F
= \frac{\|A(G)\|_F}{\sqrt{r(r-1)}}.
\]
It follows that $\|\bm{1}_{V_i}\circ\bm{w}\|^2 = \|\bm{w}\|^2 - \sqrt{1-1/r} \cdot \|A(G)\|_F$ 
for $i\in [r]$.

Finally, we consider the sufficiency for the equality in \eqref{eq:weighted-local-bound}. 
Suppose there exists a vector $\bm{w}$ such that the adjacency matrix decomposes as 
$A(G) = \sum_{i=1}^r (\bm{1}_{V_i}\circ\bm{w}) \big((\bm{1} - \bm{1}_{V_i})\circ\bm{w}\big)^{\mathrm{T}}$, 
and $\|\bm{1}_{V_i}\circ\bm{w}\|^2 = \|\bm{w}\|^2 - \sqrt{1-1/r} \cdot \|A(G)\|_F$ for $i\in [r]$.
(the case where $A(G) = -\sum_{i=1}^r (\bm{1}_{V_i}\circ\bm{w}) \big((\bm{1} - \bm{1}_{V_i})\circ\bm{w}\big)^{\mathrm{T}}$ follows from a similar argument.)
We begin by noting two key observations. First, for any $i, j\in [r]$, we have 
\[
\big((\bm{1} - \bm{1}_{V_i})\circ\bm{w}\big)^{\mathrm{T}} 
(\bm{1}_{V_j} \circ \bm{w}) = 
\begin{cases}
\|\bm{1}_{V_j}\circ\bm{w}\|^2, & i\neq j, \\
0, & i = j.
\end{cases}
\]
Second, the vector $\bm{w}$ can be decomposed over the partition as 
$\bm{w} = \sum_{j=1}^r (\bm{1}_{V_j} \circ \bm{w})$. 
Using these facts, we deduce that
\begin{align*}
A(G) \bm{w} 
& = \sum_{i=1}^r (\bm{1}_{V_i}\circ\bm{w}) \big((\bm{1} - \bm{1}_{V_i})\circ\bm{w}\big)^{\mathrm{T}} 
\Bigg(\sum_{j=1}^r (\bm{1}_{V_j} \circ \bm{w})\Bigg) \\
& = \sum_{i\neq j} (\bm{1}_{V_i}\circ\bm{w}) \big((\bm{1} - \bm{1}_{V_i})\circ\bm{w}\big)^{\mathrm{T}} 
(\bm{1}_{V_j} \circ \bm{w}) \\
& = \sum_{i\neq j} \|\bm{1}_{V_j}\circ \bm{w}\|^2 \cdot (\bm{1}_{V_i}\circ\bm{w}).
\end{align*}
This, along with careful calculations, indicates that
\begin{align*}
A(G) \bm{w} 
& = \sum_{i=1}^r \Bigg(\sum_{j:\, j\neq i} \|\bm{1}_{V_j}\circ \bm{w}\|^2\Bigg) (\bm{1}_{V_i}\circ\bm{w}) \\
& = \sum_{i=1}^r \big(\|\bm{w}\|^2 - \|\bm{1}_{V_i}\circ \bm{w}\|^2\big) (\bm{1}_{V_i}\circ\bm{w}) \\
& = \sqrt{\frac{r-1}{r}} \|A(G)\|_F \cdot \sum_{i=1}^r \bm{1}_{V_i}\circ\bm{w} \\
& = \sqrt{\frac{r-1}{r}} \|A(G)\|_F\, \bm{w}.
\end{align*}
Since $G$ is a complete $r$-partite graph, we have 
\[
\sqrt{2 \sum_{e\in E(G)} \frac{\cl(e) - 1}{\cl(e)} w(e)^2} = \sqrt{\frac{r-1}{r}} \|A(G)\|_F.
\]
It follows that equality holds in \eqref{eq:weighted-local-bound}.
This completes the proof of the theorem.
\end{proof}

\section{Proofs of consequences}
\label{sec:proofs}

This section is devoted to the proofs of Corollary \ref{coro:Liu-Ning-26}, 
Theorem \ref{thm:local-Turan-degree}, and Corollary \ref{thm:AC}\,--\,Corollary \ref{coro:weighted-Cvetkovic}.

To clarify the logical structure of the proofs, the following diagram outlines the 
dependencies among the results.

\begin{center}
\begin{tikzpicture}[every node/.style={font=\scriptsize}]
\matrix[row sep = 1.5em, column sep = 4.6em]{
& \node[chart] (weighted-Cvetkovic) {Corollary \ref{coro:weighted-Cvetkovic} \\ \tiny (Weighted Cvetkovi\'c's theorem)}; & \\
\node[chart] (weighted-Edwards-Elphick) {Corollary \ref{coro:weighted-Edwards-Elphick} \\ \tiny (Weighted Edwards-Elphick's theorem)}; 
& \node[chart] (thm) {Theorem \ref{thm:main}}; & \node[chart] (coro-2) {Corollary \ref{coro:Bradc2022}}; \\
\node[chart] (Hong) {Corollary \ref{coro:Hong} \\ \tiny (Hong's inequality)}; & \node[chart] (Liu-Ning) {Corollary \ref{coro:Liu-Ning-26}}; 
& \node[chart] (vertex-degree-localized) {Corollary \ref{coro:spectral-vertex-degree-localized}}; \\
\node[chart] (Niki) {Corollary \ref{coro:Niki-spectral-Turan-edge} \\ \tiny (Nikiforov's inequality)}; & \node[chart] (Stanley) {Corollary \ref{coro:Stanley} \\ \tiny (Stanley's inequality)}; 
& \node[chart] (localized-Wilf) {Corollary \ref{coro:spectral-localized-Turan-vertex} \\ \tiny (Localized Wilf's theorem)}; \\
& \node[chart] (Adak-Chandran) {Corollary \ref{thm:AC} \\ \tiny (Adak and Chandran's theorem)}; & \\};
\node[rotate=90] at ($(thm.north)!0.5!(weighted-Cvetkovic.south)$) {$\xLongrightarrow{\hspace{5mm}}$}; 
\node[right] at (thm.east) {$\xLongrightarrow{\hspace{11mm}}$};
\node[left] at (thm.west) {$\xLongleftarrow{\hspace{11mm}}$};
\node[rotate=90] at ($(thm.south)!0.5!(Liu-Ning.north)$) {$\xLongleftarrow{\hspace{4mm}}$};
\node[right] at (Liu-Ning.east) {$\xLongrightarrow{\hspace{11mm}}$};
\node[left] at (Liu-Ning.west) {$\xLongleftarrow{\!\!\text{Theorem \ref{thm:lowerbound-c(e)}}}$};
\node[rotate=90] at ($(vertex-degree-localized.south)!0.5!(localized-Wilf.north)$){$\xLongleftarrow{\hspace{4mm}}$};
\node[right] at ($(vertex-degree-localized.south)!0.5!(localized-Wilf.north)$) {Theorem \ref{thm:local-Turan-degree}};
\node[rotate=35] at ($(localized-Wilf.south west)!0.5!(Adak-Chandran.north east)$){$\xLongleftarrow{\hspace{13mm}}$};
\node[right] at (Niki.east) {$\xLongrightarrow{\hspace{11mm}}$};
\node[rotate=35] at ($(vertex-degree-localized.south west)!0.5!(Stanley.north east)$) {$\xLongleftarrow{\hspace{11mm}}$};
\node[rotate=35] at ($(Liu-Ning.south west)!0.5!(Niki.north east)$) {$\xLongleftarrow{\hspace{13mm}}$};
\node[rotate=145] at ($(Hong.south east)!0.5!(Stanley.north west)$) {$\xLongleftarrow{\hspace{13mm}}$};
\end{tikzpicture}
\end{center}

Setting $w(e)=1$ in our main theorem, we immediately obtain Corollary \ref{coro:Liu-Ning-26}. 
Moreover, Corollary \ref{coro:Liu-Ning-26} implies Corollary \ref{coro:Niki-spectral-Turan-edge}, 
since $\cl(e)\leq r$ for every edge $e$, also given in \cite{LiuN26}. 

\begin{proof}[Proof of Theorem \ref{thm:local-Turan-degree}]
Recall that $W(G)$ is the $n\times n$ matrix whose $(i,j)$-entry is 
$\big(\frac{\cl(i)}{\cl(i)-1} + \frac{\cl(j)}{\cl(j)-1}\big)/2$. Observe that
\begin{align*}
\sum_{v\in V(G)} d(v)\cdot \frac{\cl(v) - 1}{\cl(v)}
& = \sum_{uv\in E(G)} \bigg(\frac{\cl(u) - 1}{\cl(u)} + \frac{\cl(v) - 1}{\cl(v)}\bigg) \\
& = \sum_{uv\in E(G)} \bigg(\frac{\cl(u)}{\cl(u) - 1} + \frac{\cl(v)}{\cl(v) - 1}\bigg) \cdot
\frac{(\cl(u) - 1) (\cl(v) - 1)}{\cl(u) \cl(v)} \\
& = \bm{x}^{\mathrm{T}} \big(W(G)\circ A(G)\big) \bm{x}, 
\end{align*}
where the vector $\bm{x}\in\mathbb{R}^n$ is defined by
\[
\bm{x} = \bigg(\frac{\cl(v_1)-1}{\cl(v_1)}, \frac{\cl(v_2)-1}{\cl(v_2)}, \ldots, \frac{\cl(v_n)-1}{\cl(v_n)}\bigg)^{\mathrm{T}}.
\]
It follows from Lemma \ref{lem:generalized-MS-inequality} that
\begin{align}
\sum_{v\in V(G)} d(v)\cdot \frac{\cl(v) - 1}{\cl(v)} 
& = \bm{x}^{\mathrm{T}} \big(W(G)\circ A(G)\big) \bm{x} \nonumber \\ 
& = \Big(\frac{\bm{x}}{\|\bm{x}\|_1}\Big)^{\mathrm{T}} \big(W(G)\circ A(G)\big) 
\bigg(\frac{\bm{x}}{\|\bm{x}\|_1}\bigg) \cdot \|\bm{x}\|_1^2 \nonumber \\ 
& \leq \|\bm{x}\|_1^2 = \Bigg(\sum_{v\in V(G)} \frac{\cl(v) - 1}{\cl(v)}\Bigg)^2. \label{eq:auxiliary-eq-5}
\end{align}

Next, we consider the equality in \eqref{eq:auxiliary-eq-5}. From the proof above, equality 
holds if and only if $(\bm{x}/\|\bm{x}\|_1)^{\mathrm{T}} (W(G)\circ A(G)) (\bm{x}/\|\bm{x}\|_1) = 1$.
If $G$ is, up to isolated vertices, a regular complete multi-partite graph, it is straightforward 
to verify that equality holds. Conversely, if equality holds, then by Lemma \ref{lem:generalized-MS-inequality}, 
$G[\supp(\bm{x})]$ is a complete $\omega$-partite graph whose vertex classes $V_1,V_2,\ldots,V_{\omega}$ 
satisfy $\|\bm{1}_{V_i}\circ\bm{x}\|_1 = 1/\omega$ for all $i\in [\omega]$. Hence, for any $i\in [\omega]$,
\[
\frac{1}{\omega} = \|\bm{1}_{V_i}\circ\bm{x}\|_1 = \frac{1}{\|\bm{x}\|_1}\sum_{v\in V_i} \frac{\cl(v) -1 }{\cl(v)} 
= \Big(1 - \frac{1}{\omega}\Big)\cdot \frac{|V_i|}{\|\bm{x}\|_1}. 
\]
Consequently, $G$ is, up to isolated vertices, a regular complete multi-partite graph.
This completes the proof of Theorem \ref{thm:local-Turan-degree}.
\end{proof}

\begin{proof}[Proof of Corollary \ref{thm:AC}]
The proof follows from Corollary \ref{coro:spectral-localized-Turan-vertex} and 
the well-known spectral inequality that the spectral radius of a graph bounds its average degree from below.
\end{proof}

It is known that Hong's inequality implies Stanley's inequality. We show, however, 
that our Corollary \ref{coro:spectral-vertex-degree-localized} is also stronger than Stanley's inequality (Corollary \ref{coro:Stanley}).

\begin{proof}[Proof of Corollary \ref{coro:Stanley}]
(Stanley's inequality) Assuming Corollary \ref{coro:spectral-vertex-degree-localized} 
(which can be proved using only Theorem \ref{thm:main}; see the details later), we have
\[
\lambda (G)^2 \leq 2m-\sum_{v\in V(G)}\frac{d(v)}{\cl(v)}
\leq 2m-\sum_{v\in V(G)}\frac{d(v)}{d(v) + 1}.
\]
The last inequality holds as $\cl(v)$ by $d(v)+1$ for each $v\in V(G)$.
Hence, to complete the proof, it suffices to prove that 
\[
\sqrt{2m-\sum_{v\in V}\frac{d(v)}{d(v) + 1}}\leq -\frac{1}{2}+\sqrt{2m+\frac{1}{4}},
\] 
which is equivalent to Lemma \ref{Lemma:Main}.
\end{proof}

\begin{proof}[Proof of Corollary \ref{coro:Hong}]
(Hong's inequality) Let $G_1,G_2,\ldots,G_t$ be the connected components of $G$, where $t\geq 1$
and $|V(G_i)|\geq 2$ for all $i\in [t]$. Assume without loss of generality that
$\lambda(G_1) = \lambda (G)$. By Corollary \ref{coro:Liu-Ning-26}, we have
\[
\lambda (G)^2 = \lambda (G_1)^2 \leq 2\sum_{e\in E(G_1)} \frac{\cl_{G_1}(e) - 1}{\cl_{G_1}(e)}
 = 2e(G_1) - 2\sum_{e\in E(G_1)} \frac{1}{\cl_{G_1}(e)}.
\]
Note that $2e(G_i) - |V(G_i)| \geq 0$ for all $i\in [t]$. Hence,
\begin{align*}
\lambda(G)^2 
& \leq 2e(G_1) - 2\sum_{e\in E(G_1)} \frac{1}{\cl_{G_1}(e)} + \sum_{i=2}^t \big(2e(G_i) - |V(G_i)|\big) \\
& = 2m - \Bigg(2\sum_{e\in E(G_1)} \frac{1}{\cl_{G_1}(e)} + \sum_{i=2}^t |V(G_i)|\Bigg) \\
& \leq 2m - n + 1,
\end{align*}
where the last inequality follows by applying \eqref{eq:sum_ce_lower_bound} from 
Theorem \ref{thm:lowerbound-c(e)} to the graph $G_1$.
\end{proof}

\begin{remark}
In \cite{Nikiforov20}, Nikiforov demonstrated that Corollary \ref{coro:Niki-spectral-Turan-edge}, 
combined with Tur\'an's Theorem, implies Stanley's inequality. However, Corollary \ref{coro:Niki-spectral-Turan-edge} 
cannot imply Hong's inequality, as numerous counterexamples exist. It is precisely 
this gap that our approach bridges, underscoring its fundamental difference.
\end{remark}

\begin{proof}[Proof of Corollary \ref{coro:spectral-vertex-degree-localized}]
By Corollary \ref{coro:Liu-Ning-26}, we have
\begin{equation}\label{eq:auxiliary-inequality-1}
\lambda (G)^2 \leq 2 \sum_{e\in E(G)} \frac{\cl(e) - 1}{\cl(e)}.
\end{equation}
Observe that for any edge $e=uv\in E(G)$, $\cl(e)\leq \cl(u)$ and $\cl(e)\leq \cl(v)$. Thus, 
\begin{equation}\label{eq:auxiliary-inequality-2}
\lambda (G)^2\leq \sum_{e\in E(G)}\left(\frac{\cl(v)-1}{\cl(v)}+\frac{\cl(u)-1}{\cl(u)}\right) 
= \sum_{v\in V(G)} d(v)\cdot \frac{\cl(v)-1}{\cl(v)}.
\end{equation}

If equality holds in \eqref{eq:local-bound-vertex}, then equalities must hold in 
both \eqref{eq:auxiliary-inequality-1} and \eqref{eq:auxiliary-inequality-2}. 
Consequently, we have (i) $G$ is a complete bipartite graph for $\omega = 2$, 
or a complete regular $\omega$-partite graph for $\omega \geq 3$, and 
(ii) $\cl(e) = \cl(v)$ for any $e\in E(G)$ and $v\in V(G)$. 
It is straightforward to verify that any graph satisfying (i) also satisfies (ii). 
The converse implication is obvious. This completes the proof.
\end{proof}

The result above provides a spectral, vertex-localized version of Tur\'an's theorem.

\begin{proof}[Proof of Corollary \ref{coro:spectral-localized-Turan-vertex}]
By Corollary \ref{coro:spectral-vertex-degree-localized} we obtain 
\[
\lambda (G)^2 \leq \sum_{v\in V(G)} d(v)\cdot\frac{\cl(v)-1}{\cl(v)}.
\]
Combining this with Theorem \ref{thm:local-Turan-degree} gives the desired result.
\end{proof}

Since $\chi(G)\geq \omega(G)$ holds for any graph, Corollary \ref{coro:weighted-Edwards-Elphick} 
follows directly from Theorem \ref{thm:main}.

\begin{proof}[Proof of Corollary \ref{coro:weighted-Cvetkovic}]
Applying Corollary \ref{coro:weighted-Edwards-Elphick} yields 
\[
\chi(G) \geq 1 + \frac{\lambda(G)^2}{\|A(G)\|_F^2-\lambda(G)^2}
\geq 1 + \frac{\lambda(G)^2}{2\sum_{e\in E(G)} w(e) - \lambda(G)^2}.
\]
By Rayleigh’s inequality, $2\sum_{e\in E(G)} w(e) \leq n\cdot\lambda (G)$. Substituting this, we find
\[
\chi (G) \geq 1 + \frac{\lambda(G)^2}{n\cdot\lambda (G) - \lambda(G)^2} 
= 1 + \frac{\lambda(G)}{n - \lambda(G)},
\]
as desired.
\end{proof}

\section{Two technical results}
\label{sec:proofs-technical-results}

In this section, we present two technical results and the proofs. The first one is a main tool 
for proving Corollary \ref{coro:Hong}. For the second one, it supports the proofs of 
Corollaries \ref{coro:Stanley} and \ref{coro:spectral-localized-Turan-vertex}.

For a graph $G$, we say that $G$ satisfies \emph{Property P} if for every maximal clique 
$S$ of $G$ and every vertex $v\in S$, the vertex $v$ has at least one neighbor outside of $S$.

\begin{lemma}\label{Lem:Property-P}
Let $G$ be a graph on $n$ vertices without isolated vertices. If $G$ has Property P, then 
\[
\sum_{e\in E(G)} \frac{1}{c(e)} \geq\frac{n}{2}.
\]
\end{lemma}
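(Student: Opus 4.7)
The plan is to establish the per-vertex inequality
\[
\sum_{u \in N(v)} \frac{1}{\cl(uv)} \geq 1 \quad \text{for every } v \in V(G),
\]
from which the lemma follows at once by summing over $v$ and applying the handshake-type identity $\sum_{e \in E(G)} \frac{1}{\cl(e)} = \frac{1}{2}\sum_{v \in V(G)} \sum_{u \in N(v)} \frac{1}{\cl(uv)}$.

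To prove the per-vertex bound, I would fix a vertex $v$ (which has at least one neighbor since $G$ has no isolated vertices, so $\cl(v) \geq 2$) and choose a clique $K$ through $v$ with $|K| = \cl(v)$; such a $K$ is automatically a maximal clique, because it already realizes the maximum clique size through $v$. For every $u \in K \setminus \{v\}$ the edge $uv$ lies inside $K$, so $\cl(uv) \geq \cl(v)$, while any clique containing $uv$ also contains $v$, giving $\cl(uv) \leq \cl(v)$. Consequently $\cl(uv) = \cl(v)$ for all $u \in K \setminus \{v\}$, and these $\cl(v)-1$ edges contribute exactly $\frac{\cl(v)-1}{\cl(v)}$ to the vertex sum.

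This is where Property P enters. Applied to the maximal clique $K$ and the vertex $v \in K$, it supplies a neighbor $w$ of $v$ outside $K$. Using the trivial bound $\cl(vw) \leq \cl(v)$ once more, the single edge $vw$ contributes at least $\frac{1}{\cl(v)}$. Adding the two contributions yields
\[
\sum_{u \in N(v)} \frac{1}{\cl(uv)} \;\geq\; \frac{\cl(v)-1}{\cl(v)} + \frac{1}{\cl(v)} \;=\; 1,
\]
since every remaining edge at $v$ only adds a non-negative amount.

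There should be no substantial obstacle: the argument hinges on the two-sided identification $\cl(uv) = \cl(v)$ for $u$ in a largest clique through $v$, which isolates Property P as precisely the hypothesis needed to supply the missing $1/\cl(v)$ term. This also clarifies sharpness and the necessity of the hypothesis: for $G = K_n$, Property P fails and one computes $\sum_e 1/\cl(e) = (n-1)/2 < n/2$, so Property P (or some substitute) is genuinely required.
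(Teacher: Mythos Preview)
Your proof is correct. Both your argument and the paper's reduce to the handshake identity $2\sum_{e}\frac{1}{\cl(e)}=\sum_{v}\sum_{u\in N(v)}\frac{1}{\cl(uv)}$ and then establish that each inner sum is at least $1$, but the per-vertex bounds are obtained differently. The paper shows that Property~P forces $\cl(uv)\le\min\{d(u),d(v)\}$ for \emph{every} edge (extend a largest clique through $uv$ to a maximal one and use the outside neighbor of $u$), which immediately gives $\sum_{u\in N(v)}\frac{1}{\cl(uv)}\ge\sum_{u\in N(v)}\frac{1}{d(v)}=1$. Your route is more structural: you pin down the exact value $\cl(uv)=\cl(v)$ for the $\cl(v)-1$ edges inside a largest clique through $v$, and then invoke Property~P once to locate a single extra neighbor supplying the missing $\frac{1}{\cl(v)}$. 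The paper's inequality $\cl(e)\le d(v)$ is a reusable edge-wise consequence of Property~P that may be handy elsewhere; your argument, on the other hand, makes transparent exactly how much of Property~P is actually consumed (one outside neighbor per vertex, applied only to a specific maximal clique), and your $K_n$ remark nicely pinpoints why the hypothesis cannot be dropped.
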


\begin{proof}
Since $G$ has Property P, for each edge $e=uv\in E(G)$, we have $c(e)\leq\min\{d(u), d(v)\}$.
This implies that $2/\cl(e) \geq 1/d(u) + 1/d(v)$. Summing this inequality over all edges yields
\[
\sum_{e\in E(G)} \frac{2}{\cl(e)} \geq \sum_{e=uv\in E(G)} \Big(\frac{1}{d(u)}+\frac{1}{d(v)}\Big)
= \sum_{v\in V(G)} \frac{d(v)}{d(v)} = n,
\]
which completes the proof of the lemma.
\end{proof}

Next, we prove Theorem \ref{thm:lowerbound-c(e)}, which is the key tool for proving that our main 
result implies Hong's theorem. This theorem may also be of independent interest.

\begin{proof}[Proof of Theorem \ref{thm:lowerbound-c(e)}]
We prove only inequality \eqref{eq:sum_ce_lower_bound}, as the proof for \eqref{eq:sum_ce_minor_lower_bound} 
is similar. Furthermore, it suffices to consider the case where $G$ is connected.

We proceed by induction on $n$. The base case $n=2$ is trivial. Now, assume $n\geq 3$ and
that the result holds for all graphs of order less than $n$. If $G$ has Property P, 
then the conclusion follows immediately from Lemma \ref{Lem:Property-P}. 
We may therefore assume that $G$ has no Property P. Consequently, there exist 
a maximal clique $S$ and a vertex $w\in S$ such that $N_G(w)\subseteq S$. 



We first assume that $G - w$ is disconnected. Let $H_1,H_2,\ldots,H_t$ be all components of $G-w$. 
Let $G_i = G[V(H_i)\cup \{w\}]$. Obviously, $E(G_i)\neq \emptyset$. By induction, we have 
\[
\sum_{e\in E(G_i)} \frac{1}{\cl_{G_i}(e)} \geq \frac{|H_i|}{2}
\]
for each $i\in [t]$. Summing up all these inequalities, with help of the obvious fact 
$\cl_{G_i}(e) = \cl_{G}(e)$ for any $e\in E(G_i)$, we have
\[
\sum_{e\in E(G)} \frac{1}{\cl_G(e)}\geq \sum_{i=1}^t \frac{|H_i|}{2} = \frac{n-1}{2}.
\]

Now, assume $G - w$ is connected. Observe that 
\[
\sum_{e\in E(G)} \frac{1}{\cl_G(e)} - \sum_{e\in E(G-w)} \frac{1}{\cl_{G-w}(e)} 
= \sum_{e:\, w\in e} \frac{1}{\cl_G(e)} + \sum_{e\in E(S-w)} \Big(\frac{1}{\cl_G(e)} - \frac{1}{\cl_{G-w}(e)}\Big).
\]
Since for each $e\in E(S-w)$ we have $\cl_{G-w}(e) \geq \cl_G(e) - 1$, the inductive hypothesis yields
\begin{align*}
\sum_{e\in E(G)} \frac{1}{\cl_G(e)} 
& = \sum_{e\in E(G-w)} \frac{1}{\cl_{G-w}(e)} + \sum_{e:\, w\in e} \frac{1}{\cl_G(e)} + \sum_{e\in E(S-w)} \Big(\frac{1}{\cl_G(e)} - \frac{1}{\cl_{G-w}(e)}\Big) \\
& \geq \frac{n-2}{2} + \frac{|S|-1}{|S|} + \sum_{e\in E(S-w)} \Big(\frac{1}{\cl_G(e)} - \frac{1}{\cl_G(e) - 1}\Big) \\
& = \frac{n-2}{2} + \frac{|S|-1}{|S|} - \binom{|S|-1}{2} \cdot \frac{1}{|S|(|S|-1)} \\
& = \frac{n-1}{2},
\end{align*}
completing the proof.
\end{proof}


\begin{lemma}\label{Lemma:Main}
Let $G$ be a graph on $n$ vertices and $m$ edges. Then
\begin{equation}\label{eq:auxiliary-eq}
\Bigg(n - \sum_{v\in V(G)} \frac{1}{d_G(v) + 1}\Bigg) 
\Bigg(n + 1 - \sum_{v\in V(G)} \frac{1}{d_G(v)+1}\Bigg) \geq 2m.
\end{equation}
\end{lemma}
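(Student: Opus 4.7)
My plan is to recast the inequality in the equivalent form $S'(S'+1) \geq 2m$, where $S' := n - S = \sum_v d(v)/(d(v)+1)$. Using the algebraic identity $2m = S' + \sum_v d(v)^2/(d(v)+1)$, obtained by writing $d(v) = d(v)/(d(v)+1) + d(v)^2/(d(v)+1)$ and summing over $v$, this is further equivalent to $(S')^2 \geq \sum_v d(v)^2/(d(v)+1)$. So the task reduces to a clean inequality involving only the degree sequence.

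I would then proceed by induction on $n$. Isolated vertices can be deleted without changing either side, so we may assume $\delta(G) \geq 1$. If $G$ contains a pendant vertex $v_0$ whose unique neighbor $u_0$ has degree $d_0$, set $G' := G - v_0$; a direct calculation gives $\Delta S' := S'(G) - S'(G') = \tfrac{1}{2} + \tfrac{1}{d_0(d_0+1)}$ while $2m(G) - 2m(G') = 2$. After applying the IH $S'(G')(S'(G')+1) \geq 2m - 2$, the inductive step reduces to the numerical inequality $\Delta S' \cdot (2 S'(G') + \Delta S' + 1) \geq 2$, which one can verify from the local lower bound $S'(G') \geq \tfrac{d_0-1}{d_0} + \tfrac{d_0-1}{2}$ obtained by summing the contributions of $u_0$ and its $d_0 - 1$ remaining neighbors in $G'$ (the extra $\tfrac{1}{d_0(d_0+1)}$ term in $\Delta S'$ is what makes the check go through for every value of $d_0$).

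The hard part will be the case $\delta(G) \geq 2$, where no pendant is available. Removing a minimum-degree vertex $v_0$ now simultaneously lowers the degrees of its $\delta$ neighbors, so the analogue of the numerical inequality becomes $\Delta S' \cdot (2S'(G') + \Delta S' + 1) \geq 2\delta$ with $\Delta S' = \tfrac{\delta}{\delta+1} + \sum_{i=1}^\delta \tfrac{1}{d_i(d_i+1)}$, and carrying it through requires sharper structural lower bounds on $S'(G')$ (e.g., $S'(G') \geq \tfrac{\delta}{2} + (n-\delta-1) \cdot \tfrac{\delta}{\delta+1}$ coming from the neighbors of $v_0$ and the remaining vertices). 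A cleaner alternative is a compression/shifting argument: show that $S = \sum_v 1/(d(v)+1)$ is maximized, over all graphs with fixed $n$ and $m$, by the threshold graph $K_{k+1} \cup \overline{K}_{n-k-1}$ (with $\binom{k+1}{2} = m$), on which the lemma holds with equality; swapping an edge from a lower- to a higher-degree vertex pair can be seen not to decrease $S$, so iterating yields the extremal graph, which forces the inequality in general.
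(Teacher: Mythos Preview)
Your reformulation as $S'(S'+1)\geq 2m$ with $S'=\sum_v d(v)/(d(v)+1)$, and the equivalent $(S')^2 \geq \sum_v d(v)^2/(d(v)+1)$, is correct and cleaner than anything in the paper. The inductive scheme (delete a minimum-degree vertex and bound $\Delta S'\cdot(2S'(G')+\Delta S'+1)$ from below) is also exactly what the paper does, and your pendant case goes through. The gap is that you have not actually handled $\delta\geq 2$. The structural bound you propose, $S'(G')\geq \tfrac{\delta}{2}+(n-\delta-1)\tfrac{\delta}{\delta+1}$, even sharpened to $(\delta-1)+(n-1-\delta)\tfrac{\delta}{\delta+1}$, together with $\Delta S'\geq\tfrac{\delta}{\delta+1}$, yields a sufficient condition that \emph{fails} when $\delta$ is close to $n$ (try $\delta=n-1$: the required inequality becomes $(n-1)\bigl(\tfrac{2n-3}{n}+\tfrac{n-1}{n^2}\bigr)\geq 2(n-1)$, which is false). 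The paper runs into the same obstruction and resolves it by treating $\delta\in\{0,1,n-2,n-1\}$ as separate cases, verifying all graphs with $n\leq 5$ directly, and for $2\leq\delta\leq n-3$ carrying out a fairly intricate estimate of $\psi_G(n)-\psi_{G'}(n-1)$ that ultimately reduces to the convexity of $x\mapsto 2n-2x-(4n-1)/(x+1)$ on that range. So your outline is right, but the ``hard part'' you flag is essentially the whole proof, and the paper's execution is not shorter than what you would need.

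The compression alternative is not a valid shortcut as written. Swapping an edge on a lower-degree pair for a non-edge on a higher-degree pair does increase $S$ when available, but such a swap need not exist: in $K_{1,3}$ every edge meets the degree-$3$ center while every non-edge joins two degree-$1$ leaves, so no swap of your type is possible---yet $K_{1,3}$ is not the $S$-maximizer ($S=7/4<2=S(K_3\cup K_1)$). Hence iterating your move can stall at a non-extremal graph. To rescue the idea you would need either a richer class of moves together with a termination argument, or a direct verification of the lemma on \emph{all} threshold graphs with $m$ edges (not only the triangular case $m=\binom{k+1}{2}$, which is the sole equality case).
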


\begin{proof}
We proceed by induction on $n$. The validity of \eqref{eq:auxiliary-eq} for all graphs with
$n\leq 5$ can be verified by a computer program, establishing the base case. 

Now assume that the lemma holds for all graphs of order $n-1$, where $n\geq 6$. 
Let $u\in V(G)$ with $d_G(u) = \delta (G) =:\delta$, and
define $G' = G - u$. In the following, if there is no danger of ambiguity, we use $d(v)$ 
and $d'(v)$ instead of $d_G(v)$ and $d_{G'}(v)$, respectively. For brevity, set
\[
\psi_G(n) := n^2 + n - (2n+1)\sum_{v\in V(G)} \frac{1}{d(v)+1} + \Bigg(\sum_{v\in V(G)} \frac{1}{d(v)+1}\Bigg)^2.
\]
By the induction hypothesis applied to $G'$, we have $\psi_{G'}(n-1) \geq 2(m-\delta)$. 
We now analyze the difference $\psi_G(n) - \psi_{G'}(n-1)$. Starting from the definition:
\begin{align*}
& ~ \psi_G(n) - \psi_{G'}(n-1) \\
= & ~ 2n {-} \Bigg(\sum_{v\in V(G)} \frac{2n + 1}{d(v) {+} 1} {-} \sum_{v\in V(G')} \frac{2n - 1}{d'(v) {+} 1}\Bigg) 
+ \Bigg(\sum_{v\in V(G)} \frac{1}{d(v) {+} 1}\Bigg)^2 {-} \Bigg(\sum_{v\in V(G')} \frac{1}{d'(v) {+} 1}\Bigg)^2.
\end{align*}
A careful calculation yields  
\begin{align*}
& ~ \psi_G(n) - \psi_{G'}(n-1) \\
= & ~ 2n {-} \Bigg[\Bigg(\frac{2n - 1}{\delta + 1} {+} \sum_{v\in V(G')} \frac{2n - 1}{d(v) {+} 1}\Bigg)
{+} \sum_{v\in V(G)} \frac{2}{d(v) {+} 1} {-} \sum_{v\in V(G')} \frac{2n - 1}{d'(v) {+} 1}\Bigg] \\
& ~ + \Bigg(\frac{1}{\delta + 1} + \sum_{v\in V(G')} \frac{1}{d(v) + 1}\Bigg)^2 - \Bigg(\sum_{v\in V(G')} \frac{1}{d'(v) + 1}\Bigg)^2 \\
= & ~ 2n + (2n - 1) \Bigg(\sum_{v\in V(G')} \frac{1}{d'(v) + 1} - \sum_{v\in V(G')} \frac{1}{d(v) + 1}\Bigg) - \frac{2n-1}{\delta + 1} - \sum_{v\in V(G)} \frac{2}{d(v) + 1} \\ 
& ~ + \Bigg(\sum_{v\in V(G')} \frac{1}{d(v)+1}\Bigg)^2 - \Bigg(\sum_{v\in V(G')} \frac{1}{d'(v)+1}\Bigg)^2 + \frac{2}{\delta + 1} \sum_{v\in V(G')} \frac{1}{d(v) + 1} + \frac{1}{(\delta + 1)^2}.
\end{align*}
We now derive a lower bound on $\psi_G(n) - \psi_{G'}(n-1)$. After ignoring the item 
$\frac{2}{\delta + 1} \sum_{v\in V(G')} \frac{1}{d(v) + 1}$, we immediately obtain
\begin{align*}
& ~ \psi_G(n) - \psi_{G'}(n-1) \\
\geq & ~ \Bigg(\sum_{v\in V(G')} \frac{1}{d'(v) {+} 1} {-} \sum_{v\in V(G')} \frac{1}{d(v) {+} 1}\Bigg) 
\Bigg(2n-1 - \sum_{v\in V(G')} \frac{1}{d(v) {+} 1} {-} \sum_{v\in V(G')} \frac{1}{d'(v) + 1}\Bigg) \\
& ~ + 2n - \frac{2n-1}{\delta + 1} + \frac{1}{(\delta + 1)^2}- 2\sum_{v\in V(G)} \frac{1}{d(v) {+} 1}. 
\end{align*}
It follows from $\psi_{G'}(n-1) \geq 2(m-\delta)$ that 
\begin{align*}
\psi_G(n) & \geq 2m {+} \Bigg(\sum_{v\in V(G')} \bigg(\frac{1}{d'(v) {+} 1} {-} \frac{1}{d(v) {+} 1}\bigg)\Bigg) 
\Bigg(2n {-} 1 {-} \sum_{v\in V(G')} \bigg(\frac{1}{d(v) {+} 1} {+} \frac{1}{d'(v) {+} 1}\bigg)\Bigg) \\
& ~~~ + 2n -2 \delta - \frac{2n-1}{\delta + 1} + \frac{1}{(\delta + 1)^2} - 2\sum_{v\in V(G)} \frac{1}{d(v) + 1}.
\end{align*}
Note that $d'(v)\leq d(v)$ for each $v\in V(G')$. This implies that 
\[
\psi_G(n) \geq 2m + 2n -2 \delta - \frac{2n-1}{\delta + 1} 
+ \frac{1}{(\delta + 1)^2} - 2\sum_{v\in V(G)} \frac{1}{d(v) + 1}.
\]
Since $d(v)\geq\delta$ for all $v\in V(G)$, we obtain
\[
\psi_G(n) \geq 2n-2\delta - \frac{4n - 1}{\delta + 1} + \frac{1}{(\delta + 1)^2}.
\]

To finish the induction step, it suffices to prove that
\[
2n - 2\delta - \frac{4n - 1}{\delta + 1} + \frac{1}{(\delta + 1)^2} \geq 0.
\]
Toward this goal, we define the functions
\[
f(x) := 2n - 2x - \frac{4n-1}{x+1} \quad \text{and} \quad g(x) := f(x) + \frac{1}{(x + 1)^2},
\]
so that the target inequality is equivalent to $g(\delta) \geq 0$.
For $\delta\in [2, n-3]$, the convexity of $f(x)$ implies that 
\[
g(\delta) = f(\delta) + \frac{1}{(\delta+1)^2} \geq \min\{f(2), f(n-3)\} + \frac{1}{(\delta+1)^2}.
\]
A direct computation shows that
\begin{align*}
f(2) & = 2n - 4 - \frac{4n-1}{3} = \frac{2n-11}{3} > 0, \\
f(n-3) & = 2n - 2(n-3) - \frac{4n-1}{n-2} = \frac{2n-8}{n-2} > 0.
\end{align*}
Hence, the inequality holds for $2\leq\delta\leq n-3$.
It remains to consider the cases $\delta\in\{n-2, n-1\}$, 
$\delta = 1$, and $\delta = 0$.

{\bfseries Case 1.} $\delta\in\{n-2, n-1\}$. If $\delta = n-1$, 
then $G=K_n$, and it is straightforward to verify that \eqref{eq:auxiliary-eq} holds. 
If $\delta = n-2$, then 
\begin{align*}
\psi_G(n) 
& = \Bigg(n - \sum_{v\in V(G)} \frac{1}{d(v) + 1}\Bigg) 
\Bigg(n + 1 - \sum_{v\in V(G)} \frac{1}{d(v) + 1}\Bigg) \\
& \geq \bigg(n - \frac{n}{n - 1}\bigg) 
\bigg(n + 1 - \frac{n}{n - 1}\bigg) \\
& = n^2 - n - 2 - \frac{1}{n-1} + \frac{1}{(n-1)^2} \\
& > n^2 - n - \frac{11}{5}.
\end{align*}
Hence, if $2m \leq n^2 - n - 3$, then \eqref{eq:auxiliary-eq} holds. 
It remains to consider the situation $2m = n^2 - n - 2$. 
When $2m = n^2 - n - 2$, the degree sequence of $G$ is 
\[
n-1,\ldots,n-1,n-2,n-2,n-2,n-2.
\]
A direct calculation then yields
\begin{align*}
\psi_G(n) & = n^2 + n - (2n+1) \bigg(\frac{n-4}{n} + \frac{4}{n-1}\bigg)
+ \bigg(\frac{n-4}{n} + \frac{4}{n-1}\bigg)^2 \\
& \geq n^2 - n - 2 = 2m,
\end{align*}
so \eqref{eq:auxiliary-eq} is satisfied in this case as well. 

{\bfseries Case 2.} $\delta=1$. Let $N_G(u)=\{w\}$. Recall that 
\[
\psi_{G-u}(n-1) = (n-1)n - (2n-1)\Bigg(\sum_{v\in V(G)\setminus\{u,w\}} \frac{1}{d(v)+1} + \frac{1}{d(w)}\Bigg) 
+ \Bigg(\sum_{v\in V(G)\setminus\{u,w\}} \frac{1}{d(v)+1} + \frac{1}{d(w)}\Bigg)^2.
\]
A straightforward computation yields
\begin{align*}
& ~ \psi_G(n) - \psi_{G-u}(n-1) \\
= & ~ 2n - (2n-1) \Bigg(\frac{1}{2} - \frac{1}{d(w)(d(w)+1)}\Bigg) - 2\sum_{v\in V(G)} \frac{1}{d(v) + 1} \\
& ~ + \Bigg(\frac{1}{2} - \frac{1}{d(w)(d(w)+1)}\Bigg) \Bigg(\sum_{v\in V(G)} \frac{1}{d_G(v)+1} 
+ \sum_{v\in V(G)\setminus\{u,w\}} \frac{1}{d(v)+1} + \frac{1}{d(w)}\Bigg) \\
& = 2n - \Bigg(\frac{1}{2} - \frac{1}{d(w)(d(w)+1)}\Bigg) \Bigg(2n - \frac{3}{2} 
- 2\sum_{v\in V(G)\setminus\{u\}} \frac{1}{d(v)+1}\Bigg) - 2\sum_{v\in V(G)} \frac{1}{d(v) + 1} \\
& \geq 2n - \Bigg(n - \frac{3}{4} - \sum_{v\in V(G)\setminus\{u\}} \frac{1}{d(v)+1}\Bigg) - 2\sum_{v\in V(G)} \frac{1}{d(v) + 1} \\
& = n - \frac{1}{4} - \sum_{v\in V(G)\setminus\{u\}} \frac{1}{d(v)+1}.
\end{align*}
It follows that 
\[
\psi_G(n) \geq \psi_{G-u}(n-1) + n - \frac{1}{4} - \sum_{v\in V(G)\setminus\{u\}} \frac{1}{d(v)+1}.
\]
By the induction hypothesis, $\psi_{G-u}(n-1) \geq 2(m-1)$. Furthermore, since
each term in $\sum_{v\in V(G)\setminus\{u\}} \frac{1}{d(v)+1}$ is at most $1/2$. 
Applying these bounds gives
\[
\psi_G(n) \geq 2(m - 1) + n - \frac{1}{4} - \frac{n-1}{2} = 2m + \frac{2n - 7}{4} > 2m.
\]

{\bfseries Case 3.} $\delta = 0$. Let $N_G(u) = \emptyset$. We have
$\psi_G(n) - \psi_{G\setminus\{u\}} (n-1) = 0$. Hence, $\psi_G(n) = \psi_{G\setminus\{u\}} (n-1) \geq 2m$.

This completes the proof of Lemma \ref{Lemma:Main}.
\end{proof}

\section*{Acknowledgment}
The second author thanks R. Adak and L.S. Chandran for drawing his attention to \cite{AC-25arxiv}. 
After the first version of the manuscript was submitted to arXiv, the authors received an email from 
R. Tripathi. The authors thank R. Tripathi for pointing out that Corollary \ref{coro:spectral-localized-Turan-vertex} 
confirms a conjecture of his, which is proposed independently of us, and other potential 
applications of our results in Probability Theory and Operator Algebras.


\begin{thebibliography}{99}
\setlength{\itemsep}{-2.5pt}
\bibitem{AC-25arxiv}
R. Adak, L. Sunil Chandran, Vertex-based localization of Tur\'an's theorem,
\href{https://arxiv.org/abs/2504.02806v3}{arXiv: 2504.02806v3}, 2025.

\bibitem{BBRS2003}
P. Balister, B. Bollob\'as, O. Riordan, R.H. Schelp,
Graphs with large maximum degree containing no odd cycles of a given length,
\emph{J. Combin. Theory Ser. B} {\bfseries 87} (2003) 366--373.

\bibitem{BFLS1978}
J.C. Bermond, J.C. Founier, M. Lars Vergnas, D. Scotteau (Eds.), Problems Combinatoires et Theories des Graphes, 
Coll. Int. C.N.R.S., No. 260, Orsay, 1976; C.N.R.S., publ. 1978. 

\bibitem{BF91}
J.A. Bondy, G. Fan, Cycles in weighted graphs, 
\emph{Combinatorica} {\bfseries 11}\,(3) (1991) 191--205.

\bibitem{Bondy-Murty2008}
J.A. Bondy, U.S.R Murty, Graph Theory,
Graduate Texts in Mathematics 244. Springer-Verlag, London, 2008.

\bibitem{Bradac2022} D. Brada\v{c}, A generalization of Tur\'an's theorem,
\href{https://arxiv.org/abs/2205.08923}{arXiv: 2205.08923}, 2022.

\bibitem{Cheng-Weng2025} Y.J. Cheng, C.W. Weng, A matrix realization of spectral bounds,
\emph{J. Combin. Theory Ser. B} {\bfseries 174} (2025) 1--27.

\bibitem{Collins2025}
B. Collins, A. Miyagawa, Operator-valued Khintchine inequality for $\varepsilon$-free semicircles, \emph{Bull. London Math. Soc.}, 2025.
DOI: 10.1112/blms.70229.

\bibitem{Cvetkovic1972}
D.M. Cvetkovi\'c, Chromatic number and the spectrum of a graph, \emph{Publ. Inst. Math. (Belgr.)} {\bfseries 28} (14) 
(1972) 25--38.

\bibitem{EE1983}
C. Edwards, C. Elphick, Lower bounds for the clique and the chromatic number of a graph. 
\emph{Discr. Appl. Math.} {\bfseries 5} (1983) 51--64.

\bibitem{Friedlan1985}
S. Friedland, The maximal eigenvalue of $0$-$1$ matrices with prescribed number of ones, 
\emph{Linear Algebra Appl.} {\bfseries 69} (1985) 33--69.

\bibitem{FMR1992}
A. Frieze, C. McDiarmid, B. Reed,
On a conjecture of Bondy and Fan,
\emph{Ars Combin.} {\bfseries 33} (1992) 329--336.

\bibitem{Haagerup-Pisier1993}
U. Haagerup, G. Pisier, Bounded linear operators between $C^*$-algebras,
\emph{Duke Math. J.} {\bfseries 71}(3) (1993) 889--925.

\bibitem{Hong1993}
Y. Hong, Bounds of eigenvalues of graphs, 
\emph{Discrete Math.} {\bfseries 123} (1993) 65--74.

\bibitem{HongShuFang2001}
Y. Hong, J.L. Shu, K.F. Fang, A sharp upper bound of the spectral radius of graphs, 
\emph{J. Combin. Theory Ser. B} {\bfseries 81} (2001) 177--183.

\bibitem{LiN25+}
B. Li, B. Ning, Localized and weighted versions of extremal problems, 
\href{https://arxiv.org/abs/2509.17055}{arXiv: 2509.17055}, 2025.

\bibitem{LiuN26}
L. Liu, B. Ning, Local properties of the spectral radius and Perron vector in graphs, 
\emph{J. Combin. Theory, Ser. B}, {\bfseries 176} (2026) 241--253.

\bibitem{MT23}
D. Malec, C. Tompkins, Localized versions of extremal problems, 
\emph{European J. Combin.} {\bfseries 112} (2023), Paper No. 103715.

\bibitem{Motzkin-Straus1965}
T. Motzkin, E. Straus, Maxima for graphs and a new proof of a theorem of Tur\'an,
\emph{Canadian J. Math.} {\bfseries 17} (1965) 533--540.

\bibitem{Nikiforov2002}
V. Nikiforov, Some inequalities for the largest eigenvalue of a graph,
\emph{Combin. Probab. Comput.} {\bfseries 11} (2002) 179--189.

\bibitem{Nikiforov20}
V. Nikiforov, Tur\'an's theorem implies Stanley's bound, 
\emph{Discuss. Math. Graph Theory} {\bfseries 40}\,(2) (2020) 601--605. 

\bibitem{Rowlinson1988}
P. Rowlinson, On the maximal index of graphs with a prescribed number of edges,
\emph{Linear Algebra Appl.} {\bfseries 110} (1988) 43--53.

\bibitem{Santos-Tripathi-Youssef2025}
P.O. Santos, R. Tripathi, P. Youssef, Khintchine inequalities, trace monoids and Tur\'an--type problems, 
\href{https://arxiv.org/abs/2506.02517}{arXiv: 2506.02517v1}, 2025.

\bibitem{Stanley1987}
R.P. Stanley, A bound on the spectral radius of graphs with $e$ edges, 
\emph{Linear Algebra Appl.} {\bfseries 87} (1987) 267--269. 

\bibitem{T25}
R. Tripathi, Private Communication, October 31, 2025.

\bibitem{Wilf1967}
H. Wilf, The eigenvalues of a graph and its chromatic number, 
\emph{J. London Math. Soc.} {\bfseries 42} (1967) 330--332.

\bibitem{Wilf1986}
H. Wilf, Spectral bounds for the clique and independence numbers of graphs, 
\emph{J. Combin. Theory Ser. B} {\bfseries 40} (1986) 113--117.

\bibitem{ZZ25}
K. Zhao, X.-D. Zhang, A localized approach for Tur\'an number of long cycles,
\emph{J. Graph Theory} {\bfseries 108}\,(3) (2025) 582--607.

\bibitem{ZZ25-2}
K. Zhao, X.-D. Zhang, Localized version of hypergraph Erd\H{o}s-Gallai Theorem,
\emph{Discrete Math.} {\bfseries 348} (2025), Paper ID 114293.

\end{thebibliography}
\end{document}